\def\MR#1{} 
\tikzset{anchorbase/.style={baseline={([yshift=-0.5ex]current bounding box.center)}},
  int/.style={thick},
  cross line/.style={preaction={draw=white,line width=6pt,-}},
  wall/.style={thin,double,blue},
  middlearrow/.style={postaction=decorate,decoration={markings,mark=at
    position .55 with {\arrow{stealth};}}},
  middlearrowrev/.style={postaction=decorate,decoration={markings,mark=at
    position .55 with {\arrowreversed{stealth};}}},
  ev/.style={shape=rectangle, draw}
}
\newsavebox{\pullback}
\sbox\pullback{%
\begin{tikzpicture}%
\draw (0,0) -- (1ex,0ex);%
\draw (1ex,0ex) -- (1ex,1ex);%
\end{tikzpicture}}
\newsavebox{\pullbackvert}
\sbox\pullbackvert{%
\begin{tikzpicture}%
\draw (0,0) -- (0.707ex,-0.707ex);%
\draw (0.707ex,-0.707ex) -- (1.41ex,0ex);%
\end{tikzpicture}}
\newtheorem{theorem}{Theorem}[section]
\newtheorem{lemma}[theorem]{Lemma}
\newtheorem{proposition}[theorem]{Proposition}
\theoremstyle{definition}
\newtheorem{definition}[theorem]{Definition}
\newtheorem{example}[theorem]{Example}
\theoremstyle{remark}
\newtheorem{remark}[theorem]{Remark}
\numberwithin{equation}{section}
\newcommand{\C}{\mathbb{C}}
\newcommand{\R}{\mathbb{R}}
\newcommand{\Z}{\mathbb{Z}}
\newcommand{\Q}{\mathbb{Q}}
\newcommand{\F}{\mathbb{F}}
\newcommand{\Hom}{\textup{\text{Hom}}}
\newcommand{\Ext}{\textup{\text{Ext}}}
\newcommand{\Aut}{\textup{\text{Aut}}}
\newcommand{\Sym}{\textup{\text{Sym}}}
\newcommand{\Spec}{\operatorname{Spec}}
\newcommand{\prim}{\textnormal{prim}}
\newcommand{\rep}{\mathsf{rep}}
\newcommand{\Fun}{\textup{\text{Fun}}}
\newcommand{\id}{\textup{\text{id}}}
\newcommand{\op}{\textup{\text{op}}}
\newcommand{\coker}{\operatorname{coker}}
\newcommand{\pt}{\text{\textnormal{pt}}}
\newcommand{\Wald}{\mathcal{S}}
\newcommand{\RWald}{\mathcal{R}}
\newcommand{\Hall}{H}
\newcommand{\HallM}{M}
\newcommand{\Jor}{\text{\textnormal{Jor}}}
\newcommand{\nil}{\mathsf{nil}}
\newcommand{\lin}{\ell}
\newcommand{\plin}{\tilde{\ell}}
\newcommand{\spaces}{\mathscr{S}}
\newcommand{\Span}{\mathsf{Span}}
\newcommand{\sm}{\mathsf{Sm}}
\newcommand{\pr}{\mathsf{P}}
\newcommand{\Mat}{\mathsf{Mat}_*}
\newcommand{\ev}{\text{\textnormal{ev}}}
\newcommand{\can}{\text{\textnormal{can}}}
\newcommand{\coh}{\mathsf{coh}}
\newcommand{\tor}{\mathsf{tor}}
\newcommand{\git}{/\!\!/}
\newcommand{\nilMod}{{\text -}\mathsf{mod}^{\mathsf{nil}}}
\newcommand{\sst}{\textnormal{ss}}
\newcommand{\st}{\textnormal{st}}
\newcommand{\fr}{\textnormal{fr}}
\newcommand{\Vect}{\mathsf{V}\mathsf{ect}}
\newcommand{\vect}{\mathsf{vect}}
\newcommand{\comm}[1]{}
\newcommand{\cS}{\mathcal{S}}
\newcommand{\Sl}[1]{\spaces_{/#1}}
\newcommand{\FSl}[1]{\mathscr{F}_{/#1}}
\renewcommand{\AA}{\mathcal{A}}
\newcommand{\BB}{\mathcal{B}}
\newcommand{\CC}{\mathcal{C}}
\mathchardef\mhyphen="2D 
\begin{document}

\title[Hall algebras via $2$-Segal spaces]{Hall algebras via $2$-Segal spaces}

\author[B. Cooper]{Benjamin Cooper}
\address{Department of Mathematics, University of Iowa, 14 MacLean Hall, Iowa City, IA
52242-1419 USA}
\email{ben-cooper@uiowa.edu}

\author[M.\,B. Young]{Matthew B. Young}
\address{Department of Mathematics and Statistics, Utah State University,
Logan, Utah 84322 USA}
\email{matthew.young@usu.edu}
\thanks{The authors thank the organizers of Higher Segal Spaces and their Applications to Algebraic $K$-Theory, Hall Algebras, and Combinatorics (24w5266), consisting of Julie Bergner, Joachim Kock, Maru Sarazola and (from the planned 2020 workshop) Mark Penney. The authors also thank the workshop participants and BIRS staff. M.\,B.\,Y. is partially supported by National Science Foundation grant DMS-2302363 and a Simons Foundation Travel Support for Mathematicians grant (Award ID 853541).}

\subjclass[2020]{Primary 54C40, 14E20; Secondary 46E25, 20C20}
\keywords{Higher Segal spaces. Hall algebras and their representations.}
\date{\today}


\begin{abstract}
This is an introduction to Hall algebras from the perspective of $2$-Segal spaces or decomposition spaces, as introduced by Dyckerhoff and Kapranov and G\'{a}lvez-Carrillo, Kock and Tonks, respectively. We explain how linearizations of the $2$-Segal space arising as the Waldhausen $\mathcal{S}_{\bullet}$-construction of a proto-exact category recover various previously known Hall algebras. We use the $2$-Segal perspective to study functoriality of the Hall algebra construction and explain how relative variants of $2$-Segal spaces lead naturally to representations of Hall algebras.
\end{abstract}

\maketitle

\section*{Introduction}

A heroic mathematician once explained to the authors that a Hall algebra is
``something like a quantum group.''  Here we present a different
perspective: a Hall algebra is ``a linearization of a $2$-Segal space.''
Realizing the hero's interpretation can be hard work, as speaking precisely about Hall algebras
necessitates many minor variations and technicalities. Their interpretation is often
justified by the observation that one can tell the story in such a way so as
to conclude with an intrinsic construction of a quantum group. On the
other hand, a wimpy mathematician who defies a litany of technical hurdles
through abstraction, may miss the trees for the forest, but sometimes gains
new and often valuable insights by having chosen a different path.

$2$-Segal spaces were introduced by Dyckerhoff and Kapranov in
\cite{dyckerhoff2019} and decomposition spaces by G\'{a}lvez-Carrillo, Kock
and Tonks in \cite{galvez2018}. These notions are equivalent, something
unsurprising considering that, in each case, the authors made their
discoveries through a careful study of the associativity relation which is
satisfied by Hall algebras and incidence algebras, respectively.
The connection between Hall algebras and $2$-Segal spaces occurs when the $2$-Segal space arises from Waldhausen's $\Wald_{\bullet}$-construction. 

For a suitable category $\CC$, the
$\Wald_{\bullet}$-construction is---roughly speaking---a simplicial object $\Wald_{\bullet}(\CC)$, 
whose $n$-simplices correspond to composition series of length $n$ in
$\CC$ \cite{waldhausen1985}. The homotopy groups of the corresponding geometric realization agree
with the algebraic $K$-theory of $\CC$:
$$K_n(\CC) \simeq \pi_{n+1} |\Wald_{\bullet}(\CC)|.$$
For Waldhausen, an important goal was to show that a broad collection of
categories $\CC$, now known as Waldhausen categories, could be considered
suitable, thereby extending Quillen's definition of algebraic $K$-theory to a much 
wider setting. For our purposes, the crucial
observation of the aforementioned authors is that the simplicial object
$\Wald_{\bullet}(\CC)$ is a $2$-Segal space. Waldhausen's 
$\Wald_{\bullet}$-construction, once equipped with the $2$-Segal structure, 
contains in its shadow the Hall algebra and its friends.

The story of Hall algebras can be told in a way which
allows these $2$-Segal structures to appear organically. The Hall
algebra of a suitable category $\CC$ is a ring which is
spanned by the isomorphism classes of objects in $\CC$ together with a
product which is determined by the combinatorics of extensions between these
objects. The first Hall algebra, discovered independently by Steinitz \cite{steinitz1901} and Hall
  \cite{hall1959}, arises when $\CC$ is the category of finite $p$-abelian groups ($p$ prime). Steinitz and Hall interpreted this
  Hall algebra in terms of the ring of symmetric functions. Perhaps the
most common definition of Hall algebras found in the literature today,
introduced by Ringel \cite{ringel1990}, concerns abelian categories $\CC$
with finiteness properties similar to those of the category of $p$-abelian
groups. Ringel's main examples concerned quivers $Q$ and their abelian categories of finite dimensional representations $\rep_{\F_q}(Q)$ over a finite field $\F_q$. It was known from Gabriel's theorem \cite{gabriel1972} that, for 
connected quivers $Q$, the category $\rep_{\F_q}(Q)$ has finitely many isomorphism classes of indecomposable objects if and only if the underlying graph of $Q$ is a Dynkin diagram from the list $A_n$, $D_n$ or $E_6$, $E_7$, $E_8$.
In each case, Ringel identified the Hall algebra of $\CC=\rep_{\F_q}(Q)$ with the positive part of the quantum
group associated to $Q$ \cite{ringel1990b}:
$$H(\rep_{\F_q}(Q)) \simeq U_{\sqrt{q}}^+(\mathfrak{g}_Q).$$ 
This construction established the Hall algebra as a central object in the field of geometric representation theory. Later, Green introduced a coproduct on the Hall algebra and extended Ringel's isomorphism to one of bialgebras \cite{green1995}. Moreover, taking $Q$ to be the Jordan (one loop) quiver recovers the original Hall algebra of Steinitz and Hall. In his study of quantum groups, Lusztig built magnificently on
these ideas, viewing the Hall algebra as a convolution algebra of
locally constant functions on the moduli stack of the objects of $\CC$ \cite{lusztig2010}. These ideas were also central to Nakajima's construction of representations of quantum affine algebras \cite{nakajima2001}.

The convolution algebra incarnation of the Hall algebra sees only very
coarsely the structure of the moduli stacks involved in its
definition. Barwick proved that the Waldhausen
$\Wald_{\bullet}$-construction $\Wald_{\bullet}(\CC)$ occurs as the
Goodwillie derivative of the functor which associates to $\CC$
its moduli space of objects \cite{barwick}. From the perspective of this
paper, the central observation of Dyckerhoff--Kapranov and
G\'{a}lvez-Carrillo--Kock--Tonks is that the Waldhausen
$\Wald_{\bullet}$-construction, seen as a $2$-Segal space, suffices to
encode the Hall algebra. The geometric definition of Lusztig factors through the $\Wald_{\bullet}$-construction, as evidenced by
Section \ref{twosec}, which produces the Hall algebra of Ringel as a linearization of the $2$-Segal space
associated to the category.

That the ideas of Steinitz, 123 years ago, have a home within an additive
linearization of a Goodwillie derivative of the most complicated object
(moduli stack of objects) that we can associate to the simplest non-trivial
object (the Jordan quiver) is a heroic novelty of sorts.

\subsection*{What is included in this article} Section \ref{onesec} contains brief accounts of $2$-Segal
spaces and the Waldhausen $\Wald_{\bullet}$-construction. We work in the generality of proto-exact categories, a non-additive generalization of exact categories which allows for the realization of Hall algebras of so-called $\F_1$-linear categories. Section \ref{twosec} constructs from a $2$-Segal space $X_{\bullet}$ an associative algebra object in spans. Various linearizations are then discussed and it is explained, via examples, how this includes a number of Hall algebras which appear in the literature. This includes Ringel's Hall algebra of $\F_q$-linear abelian categories, their $\F_1$-analogues introduced by Szczesny and the cohomological and motivic Hall algebras of Kontsevich--Soibelman and Joyce.
The remainder of the article is concerned with using $2$-Segal spaces as an organizational tool for the study of Hall algebras. Section \ref{sec:functoriality} contains a discussion of the functoriality of the Hall algebra construction while Section \ref{sec:rel2Seg} explains the role of relative variants of $2$-Segal spaces in the representation theory of Hall
algebras, two topics for which this abstraction yields constructive
approaches. 

\subsection*{What is not included in this article}

Being a narrow discussion of a subject containing many technicalities, this article
inevitably gives few proofs and glosses over some off-putting details. Readers are referred to the
references and excellent surveys by Schiffmann \cite{schiffmann2012b}, who
focuses on representation theoretic aspects of finitary Hall algebras of
quivers and curves, and Dyckerhoff \cite{dyckerhoff2018}, who takes the $2$-Segal
perspective we follow in this article. The adornments of Hall algebras 
such as coproducts and the crucial role of the Euler pairing in the theory can be found elsewhere \cite{schiffmann2012b}. The role of Hall algebras in geometric representation theory is discussed only indirectly; see \cite{schiffmann2012c} for an introduction in this direction. Similarly, we do not discuss higher categorical generalizations of Hall algebras, such as Hall monoidal categories or algebra objects in the $(\infty,2)$-category of spans \cite{dyckerhoff2018,dyckerhoff2019}. We do not discuss other ideas which interface the $2$-Segal/decomposition space perspectives of the articles \cite{dyckerhoff2019, galvez2018}. 

\section{Recollections on $2$-Segal spaces}\label{onesec}
\label{sec:Recol2Seg}

We recall the definition of $2$-Segal spaces in a form convenient for applications to Hall algebras. The theory of $2$-Segal spaces is treated in detail in \cite{dyckerhoff2019}. See \cite{stern2024} for a brief introduction.

\subsection{$2$-Segal spaces}
\label{sec:2Seg}

Let $\Delta$ be the category of non-empty finite ordinals and their monotone maps. We often use without mention the canonical equivalence of $\Delta$ with its full subcategory on the standard ordinals $[n]=\{0,\dots, n\}$, $n \in \Z_{\geq 0}$.

Let $\spaces$ be a Quillen model category. We refer to homotopy pullbacks and pushouts simply as pullbacks and pushouts, respectively, and similarly for Cartesian and coCartesian diagrams. We write $\pt$ for the final object of $\spaces$. The primary example to keep in mind is the model category of small groupoids, in which weak equivalences are equivalences of categories and cofibrations are functors which are injective on objects. With minor changes, $\spaces$ can also be taken to be an $\infty$-category with finite limits.

As a warm up and for later reference, we recall the definition of a $1$-Segal space.

\begin{definition}[\cite{rezk2001}]
\label{def:1Segal}
A simplicial object $X_{\bullet}: \Delta^{\op} \rightarrow \spaces$ is \emph{$1$-Segal} if, for every $n \geq 2$ and $0 \leq i \leq n$, the square
\[
\begin{tikzpicture}[baseline= (a).base]
\node[scale=1.0] (a) at (0,0){
\begin{tikzcd}
X_n \arrow{d} \arrow{r} & X_{\{i,\dots, n\}} \arrow{d}\\
X_{\{0, \dots,i\}} \arrow{r} & X_{\{i\}}
\end{tikzcd}
};
\end{tikzpicture}
\]
is Cartesian.
\end{definition}

In the above diagram, we have written $X_n$ for $X_{[n]}$ and all morphisms are induced by the obvious inclusions of ordinals. Similar comments apply in what follows.

\begin{definition}[\cite{dyckerhoff2019}]
\label{def:2Segal}
A simplicial object $X_{\bullet}: \Delta^{\op} \rightarrow \spaces$ is \emph{$2$-Segal} if, for every $n \geq 3$ and $0 \leq i < j \leq n$, the square
\[
\begin{tikzpicture}[baseline= (a).base]
\node[scale=1.0] (a) at (0,0){
\begin{tikzcd}
X_n \arrow{d} \arrow{r} & X_{\{i,\dots, j\}} \arrow{d}\\
X_{\{0, \dots,i,j,\dots, n\}} \arrow{r} & X_{\{i,j\}}
\end{tikzcd}
};
\end{tikzpicture}
\]
is Cartesian.
\end{definition}

\begin{definition}[\cite{dyckerhoff2019}]
\label{def:unitalSegal}
A $2$-Segal simplicial object $X_{\bullet}: \Delta^{\op} \rightarrow \spaces$ is \emph{unital} if, for every $n \geq 2$ and $0 \leq i \leq n-1$, the square
\[
\begin{tikzpicture}[baseline= (a).base]
\node[scale=1.0] (a) at (0,0){
\begin{tikzcd}
X_{n-1} \arrow{d} \arrow{r} & X_n \arrow{d}\\
X_{\{i\}} \arrow{r} & X_{\{i,i+1\}}
\end{tikzcd}
};
\end{tikzpicture}
\]
is Cartesian.
\end{definition}

It turns out that unitality is implied by the $2$-Segal condition.

\begin{theorem}[{\cite{feller2021}}]
\label{thm:unital}
A $2$-Segal space is unital.
\end{theorem}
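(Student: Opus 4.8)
The plan is to follow the two-part argument of \cite{feller2021}: first reduce the statement to the single Cartesian square with $n=2$, $i=0$, and then establish that square from the $2$-Segal conditions for $X_3$.

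\emph{Reduction to $n=2$, $i=0$.} The squares for $n=2$, $i=0$ and $n=2$, $i=1$ are interchanged by passing to the opposite simplicial object (reversing each ordinal $[m]$), so it suffices to treat $i=0$. Granting the two $n=2$ squares, I would obtain the squares for $n\geq 3$ by a diagram chase that uses only the $2$-Segal conditions: given $n$ and $i$, choose a $2$-Segal decomposition $X_n\cong X_T\times_{X_{\{p,q\}}} X_{n-1}$ in which the triangle $T$ contains the edge $\{i,i+1\}$ — for instance $(p,q)=(0,2)$ and $T=\{0,1,2\}$ when $i=0$, and $(p,q)=(i-1,i+1)$, $T=\{i-1,i,i+1\}$ when $1\leq i\leq n-1$. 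Then $X_n\to X_{\{i,i+1\}}$ factors through $X_T$, so the unitality pullback $X_{\{i\}}\times_{X_{\{i,i+1\}}} X_n$ rewrites as $\bigl(X_{\{i\}}\times_{X_{\{i,i+1\}}} X_T\bigr)\times_{X_{\{p,q\}}} X_{n-1}$; the first factor is an $n=2$ unitality pullback, which collapses onto the edge space $X_{\{p,q\}}$ by hypothesis, leaving $X_{n-1}$, and one checks that the canonical map into the original pullback realises this equivalence. (This step is already implicit in \cite{dyckerhoff2019}.)

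\emph{The square $n=2$, $i=0$.} This is the substantive point. The only structure available is the pair of $2$-Segal conditions for $X_3$, which assert $X_3\cong X_2\times_{X_1}X_2$ for each of the two triangulations of the square $[3]$, together with the simplicial identities. The mechanism is that the degeneracies $X_2\to X_3$ are sections of face maps, hence identify $X_2$ with a piece of the pullback $X_3$; tracking the resulting "degenerate" $3$-simplices through both $2$-Segal equivalences, and pasting and base-changing these Cartesian squares appropriately, I would produce the $n=2$, $i=0$ square as one built from Cartesian squares by pasting, base change and retraction, hence itself Cartesian. One ingredient worth isolating is that the relevant face map $X_2\to X_1$ admits a section (a degeneracy), so base change along it reflects equivalences; this is what lets one descend a statement proved "one level up" in $X_3$ back down to $X_2$. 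The fully homotopy-coherent bookkeeping of the gluing equivalences in $\spaces$ is the content of \cite{feller2021}, to which I refer for the details.

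\emph{Main obstacle.} The difficulty is precisely that the $2$-Segal axioms say nothing about $X_{\leq 2}$ directly, so there is no a priori constraint relating $X_2$, $X_1$ and $X_0$; one must manufacture one by going up to $X_3$ via the degeneracies and coming back down. The delicate part is not the underlying discrete combinatorics but choosing the right triangulation–degeneracy combination and keeping the homotopy coherences under control throughout.
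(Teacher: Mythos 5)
The paper offers no proof of this statement: it is imported verbatim from \cite{feller2021}, so there is no in-paper argument to compare yours against. Your first step --- reducing all unitality squares to the two cases $n=2$, $i\in\{0,1\}$, and these two to each other by passing to the opposite simplicial object --- is sound, standard, and, as you say, essentially implicit in \cite{dyckerhoff2019}; the factorization $X_{\{i\}}\times_{X_{\{i,i+1\}}} X_n\simeq \bigl(X_{\{i\}}\times_{X_{\{i,i+1\}}} X_T\bigr)\times_{X_{\{p,q\}}} X_{n-1}$ you describe does the job, modulo the compatibility check you flag.

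The genuine gap is that the entire content of the theorem lives in your second step, and there you do not actually produce an argument: you describe a mechanism (``tracking degenerate $3$-simplices through both $2$-Segal equivalences'', ``pasting, base change and retraction'') and then defer the execution to \cite{feller2021}. You never specify which degenerate simplices, which of the two triangulations of $[3]$, or which square is exhibited as a retract of which $2$-Segal square, so the claim cannot be verified even in the discrete, set-valued case, where no homotopy-coherence issues arise. The isolated ingredient --- that $d_2\colon X_2\to X_1$ admits a degeneracy section, and base change along a map with a section reflects equivalences --- is correct and is indeed a relevant tool, but it is stated rather than deployed. The net effect is that your proposal establishes exactly as much as the paper does, namely a citation of \cite{feller2021}; as a self-contained proof of the $n=2$, $i=0$ square it is incomplete.
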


\subsection{Proto-exact categories}
\label{sec:protoExact}

In order to treat various flavours of Hall algebras which appear in the literature, we work with the following non-additive generalization of Quillen's exact categories \cite{quillen1973}.

\begin{definition}[{\cite[\S 2.4]{dyckerhoff2019}}]
A \emph{proto-exact category} $\CC$ is a pointed category, whose zero object is denoted by $0$, with two classes of morphisms, $\mathfrak{I}$ and $\mathfrak{D}$, called \emph{inflations} and \emph{deflations} and denoted by $\rightarrowtail$ and $\twoheadrightarrow$, respectively, such that the following properties hold:
\begin{enumerate}[label=(\roman*)]
\item For each object $U \in \CC$, the morphism $0 \rightarrow U$ is in $\mathfrak{I}$ and the morphism $U \rightarrow 0$ is in $\mathfrak{D}$.

\item Each class $\mathfrak{I}$ and $\mathfrak{D}$ is closed under composition and contains all isomorphisms.

\item A commutative square of the form
\begin{equation}
\label{eq:biCartDiag}
\begin{tikzpicture}[baseline= (a).base]
\node[scale=1] (a) at (0,0){
\begin{tikzcd}
U \arrow[two heads]{d} \arrow[tail]{r} & V \arrow[two heads]{d}\\
W \arrow[tail]{r} & X
\end{tikzcd}
};
\end{tikzpicture}
\end{equation}
is a Cartesian if and only if it is coCartesian.

\item Any diagram
\[
\begin{tikzpicture}[baseline= (a).base]
\node[scale=1] (a) at (0,0){
\begin{tikzcd}
& V \arrow[two heads]{d}\\
W \arrow[tail]{r} & X
\end{tikzcd}
};
\end{tikzpicture}
\]
can be completed to a biCartesian square of the form \eqref{eq:biCartDiag}.

\item Any diagram
\[
\begin{tikzpicture}[baseline= (a).base]
\node[scale=1] (a) at (0,0){
\begin{tikzcd}
U \arrow[two heads]{d} \arrow[tail]{r} & V\\
W &
\end{tikzcd}
};
\end{tikzpicture}
\]
can be completed to a biCartesian square of the form \eqref{eq:biCartDiag}.
\end{enumerate}
\end{definition}

A biCartesian square of the form \eqref{eq:biCartDiag} with $W=0$ is called a \emph{conflation} or \emph{short exact sequence} and is denoted by $U \rightarrowtail V \twoheadrightarrow X$.

Examples of proto-exact categories include abelian categories, where $\mathfrak{I}$ and $\mathfrak{D}$ are the monomorphisms and epimorphisms, respectively, and exact categories, where $\mathfrak{I}$ and $\mathfrak{D}$ are the admissible monomorphisms and admissible epimorphisms, respectively. A non-exact example is as follows.

\begin{example}[\cite{szczesny2012}]
\label{eq:F1Vect}
The \emph{category $\vect_{\F_1}$ of finite dimensional vector spaces over $\F_1$} is defined as follows. An object of $\vect_{\F_1}$ is a pair ($V,*)$ consisting of a finite set $V$ with a basepoint $* \in V$. A morphism $f: (V,*) \rightarrow (W,*)$ in $\vect_{\F_1}$ is a map of pointed sets for which the restriction of $f$ to $V \setminus f^{-1}(*)$ is injective. The category $\vect_{\F_1}$ admits a proto-exact structure in which inflations (resp. deflations) are injective (resp. surjective) morphisms. Note that $\vect_{\F_1}$ is not pre-additive and so, in particular, is not exact.
\end{example}

\subsection{The Waldhausen $\Wald_{\bullet}$-construction}
\label{sec:waldhausen}

The \emph{Waldhausen $\Wald_{\bullet}$-construction} of a proto-exact category $\CC$ is the simplicial groupoid $\Wald_{\bullet}(\CC)$ whose $n$-simplices $\Wald_n(\CC)$ consist of all diagrams in $\CC$ of the form
\[
\begin{tikzpicture}[baseline= (a).base]
\node[scale=1] (a) at (0,0){
\begin{tikzcd}
0 \arrow[tail]{r} & A_{\{0,1\}} \arrow[two heads]{d} \arrow[tail]{r} & \cdots \arrow[tail]{r} & A_{\{0,n-1\}} \arrow[two heads]{d}  \arrow[tail]{r} & A_{\{0,n\}} \arrow[two heads]{d} \\
 & 0 \arrow[tail]{r} & \cdots   \arrow[tail]{r} & A_{\{1,n-1\}} \arrow[two heads]{d}  \arrow[tail]{r} & A_{\{1,n\}} \arrow[two heads]{d} \\
 &  & \ddots & \vdots \arrow[two heads]{d} & \vdots \arrow[two heads]{d} \\
  &  & & 0  \arrow[tail]{r} & A_{\{n-1,n\}} \arrow[two heads]{d} \\
  &  &  &  & 0
\end{tikzcd}
};
\end{tikzpicture}
\]
such that each sub-rectangle is biCartesian.
%
%
An isomorphism $A_{\{\bullet,\bullet\}} \xrightarrow[]{\sim} B_{\{\bullet,\bullet\}}$ in $\Wald_n(\CC)$ is the data of isomorphisms $A_{\{i,j\}} \xrightarrow[]{\sim} B_{\{i,j\}}$, $0 \leq i \leq j \leq n$, which make the obvious diagrams commute. The degeneracy map $s_i: \Wald_n(\CC) \rightarrow \Wald_{n+1}(\CC)$ inserts a row and column of identity morphisms after the $i$\textsuperscript{th} row and $i$\textsuperscript{th} column and the face map $\partial_i: \Wald_n(\CC) \rightarrow \Wald_{n-1}(\CC)$ deletes the $i$\textsuperscript{th} row and $i$\textsuperscript{th} column and then composes the adjacent morphisms.

We make explicit the simplices in low degrees. The $0$-simplices $\Wald_0(\CC)=\{0\}$ form the trivial groupoid while $\Wald_1(\CC) \simeq \CC^{\sim}$
is the maximal groupoid of $\CC$. The $2$-simplices form the groupoid of conflations in $\CC$:
\begin{equation}\label{eq:walds2}
\Wald_2(\CC) =
\left\{
\begin{tikzpicture}[baseline= (a).base]
\node[scale=1] (a) at (0,0){
\begin{tikzcd}
0 \arrow[tail]{r} & A_{\{0,1\}} \arrow[two heads]{d} \arrow[tail]{r} & A_{\{0,2\}} \arrow[two heads]{d}\\
 & 0 \arrow[tail]{r} & A_{\{1,2\}} \arrow[two heads]{d} \\
 & & 0
\end{tikzcd}
};
\end{tikzpicture}
\right\}
\simeq
\{ U \rightarrowtail V \twoheadrightarrow W\}.
\end{equation}
With respect to these equivalences, the face maps $\partial_0, \partial_1, \partial_2: \Wald_2(\CC) \rightarrow \Wald_1(\CC)$ send a conflation $U \rightarrowtail V \twoheadrightarrow W$ to $W$, $V$ and $U$, respectively.

The simplicial groupoid $\Wald_{\bullet}(\CC)$ was originally defined in the context of algebraic $K$-theory \cite{waldhausen1985}, where for a Waldhausen category $\CC$---for example, an exact category---it was proved that there is an isomorphism of groups
\begin{equation}
\label{eq:WaldK}
K_i(\CC) \simeq \pi_{i+1} \vert \Wald_{\bullet}(\CC) \vert.
\qquad
 i \geq 0
\end{equation}
Here $\vert \Wald_{\bullet}(\CC) \vert$ denotes the geometric realization of the nerve of $\Wald_{\bullet}(\CC)$.

\begin{theorem}[{\cite[Proposition 2.4.8]{dyckerhoff2019}}]
\label{thm:Wald2Seg}
The simplicial groupoid $\Wald_{\bullet}(\CC)$ is $2$-Segal.
\end{theorem}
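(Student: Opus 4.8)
The plan is to verify directly the Cartesian squares of Definition~\ref{def:2Segal}, deducing each homotopy pullback from the proto-exact axioms. Fix $n \ge 3$ and $0 \le i < j \le n$, and write $L = \{0,\dots,i,j,\dots,n\}$, $M = \{i,\dots,j\}$ and $E = \{i,j\}$, regarded as sub-ordinals of $[n]$. Writing $\mathsf{Ar}[S]$ for the poset of pairs $a \le b$ of elements of an ordinal $S$, under the product order, each of the four groupoids $\Wald_n(\CC)$, $\Wald_M(\CC)$, $\Wald_L(\CC)$, $\Wald_E(\CC)$ appearing in the square is a groupoid of staircase diagrams as in the definition of $\Wald_\bullet(\CC)$, indexed by $\mathsf{Ar}[n]$, $\mathsf{Ar}[M]$, $\mathsf{Ar}[L]$, $\mathsf{Ar}[E]$ respectively, and each arrow of the square is the evident restriction of diagrams along an inclusion of sub-ordinals. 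Since $\Wald_1(\CC) \simeq \CC^{\sim}$, the corner $\Wald_E(\CC)$ records only the object ``$A_{i,j}$'', so the square to be shown Cartesian is
\[
\Wald_n(\CC) \longrightarrow \Wald_L(\CC) \underset{\Wald_E(\CC)}{\times} \Wald_M(\CC),
\]
the restriction functor; as we work with groupoids, the right-hand side is the iso-comma groupoid, whose objects are triples $(A^L, A^M, \phi)$ with $A^L \in \Wald_L(\CC)$, $A^M \in \Wald_M(\CC)$ and $\phi \colon A^L_{i,j} \xrightarrow{\sim} A^M_{i,j}$, and we must show this functor is an equivalence.

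First I would transport $A^M$ along $\phi$ so as to assume $\phi = \id$; then $A^L$ and $A^M$ literally agree on the overlap $\mathsf{Ar}[L] \cap \mathsf{Ar}[M] = \mathsf{Ar}[E]$, and hence jointly define a diagram $A_{\bullet\bullet}$ on $\mathsf{Ar}[L] \cup \mathsf{Ar}[M]$. The combinatorial point is that a pair $(a,b)$ with $0 \le a \le b \le n$ fails to lie in $\mathsf{Ar}[L] \cup \mathsf{Ar}[M]$ exactly when it \emph{straddles the cut}, meaning $a < i$ and $i < b < j$, or $i < a < j$ and $j < b$; and that for each such pair the object $A_{a,b}$, together with the inflations and deflations relating it to the other objects of the staircase, is forced uniquely up to canonical isomorphism by the diagram already defined on $\mathsf{Ar}[L] \cup \mathsf{Ar}[M]$. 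Concretely, for a pair $(a,b)$ of the first kind one has the deflation $A_{a,j} \twoheadrightarrow A_{i,j}$ (from $A^L$) and the inflation $A_{i,b} \rightarrowtail A_{i,j}$ (from $A^M$), all three objects being non-straddling, and $A_{a,b}$ must be the object completing this cospan to a biCartesian square, produced by axiom~(iv); for a pair of the second kind one has the inflation $A_{i,j} \rightarrowtail A_{i,b}$ and the deflation $A_{i,j} \twoheadrightarrow A_{a,j}$, and $A_{a,b}$ completes this span to a biCartesian square by axiom~(v). Existence and uniqueness up to isomorphism of such completions, together with the equivalence biCartesian $\Leftrightarrow$ Cartesian $\Leftrightarrow$ coCartesian for squares of this shape, are exactly what axioms~(iii)--(v) provide.

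The inverse functor then assembles a triple $(A^L, A^M, \id)$ into a staircase on $[n]$: it is $A^L$ on $\mathsf{Ar}[L]$, $A^M$ on $\mathsf{Ar}[M]$, and a chosen biCartesian completion on each straddling pair, with the inflations and deflations among the new objects, and between them and the old ones, read off from the same completions via axioms~(iv)--(v) and assembled into an honest functor $\mathsf{Ar}[n] \to \CC$ using the uniqueness clauses. One then checks that every subrectangle of the assembled diagram is biCartesian: a subrectangle lying within $\mathsf{Ar}[L]$ or within $\mathsf{Ar}[M]$ is so by hypothesis, while a subrectangle straddling the cut is split by the row $i$ and the column $j$ into subrectangles which are biCartesian, whence so is the whole one by the pasting law for Cartesian squares (equivalently, by axiom~(iii), for coCartesian squares). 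That restriction and assembly are mutually inverse up to natural isomorphism is then formal: assembly after restriction is the identity on the nose, while restriction after assembly returns the original diagram on $\mathsf{Ar}[L] \cup \mathsf{Ar}[M]$ on the nose and, on each straddling pair, an object canonically isomorphic to the original --- because that original object already exhibits the relevant biCartesian square, by the very definition of $\Wald_n(\CC)$.

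The step I expect to be the main obstacle is not any single verification but the coherence bookkeeping: one must choose the biCartesian completions for the straddling pairs \emph{compatibly}, so that the new objects carry a system of inflations and deflations turning the assembled data into a genuine functor and so that the pasting arguments for the straddling subrectangles close up; this is where the full force of axioms~(iii)--(v) is spent, and it is a matter of care rather than of ideas. A more efficient route, which I would actually write up, is to reduce first --- by the standard path-space criterion for the $2$-Segal condition \cite{dyckerhoff2019} --- to the two squares at level $n = 3$, with diagonals $(0,2)$ and $(1,3)$; in each of these there is a single straddling object, pinned down as a kernel or cokernel by the proto-exact structure, and the required Cartesian-ness follows directly from the description \eqref{eq:walds2} of $\Wald_2(\CC)$ as the groupoid of conflations.
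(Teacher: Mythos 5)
The paper does not actually prove this statement: it cites \cite[Proposition 2.4.8]{dyckerhoff2019} and adds only the remark that the proof is elementary and ``effectively reduces to the Third Isomorphism Theorem.'' Your direct verification is exactly the argument behind that remark (and behind the cited proof): identify the homotopy pullback with the iso-comma groupoid, observe that the data missing from $\mathsf{Ar}[L]\cup\mathsf{Ar}[M]$ are precisely the straddling entries $A_{\{a,b\}}$, and recover each of these as the biCartesian completion of a cospan (axiom (iv)) or a span (axiom (v)) already present in the given data --- in the abelian case these completions are the subquotients whose compatibility is Noether's third isomorphism theorem. The coherence bookkeeping you flag (choosing the completions compatibly, producing the inflations/deflations among the new objects from universal properties, and checking the remaining subrectangles by pasting together with axiom (iii)) is genuinely where all the work lies, but your outline of it is correct.

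The one step that would fail is the ``more efficient route'' you say you would actually write up. There is no reduction of the $2$-Segal condition to the two squares at level $n=3$. The path-space criterion states that $X_\bullet$ is $2$-Segal if and only if both path spaces $P^{\triangleleft}X$ and $P^{\triangleright}X$ are $1$-Segal, and the $1$-Segal condition is a condition at \emph{every} simplicial level; it does not collapse to low degrees. What the level-$3$ squares buy you is only that, for a fixed $n$, the maps $X_n\to X_{\mathcal{T}}$ for the various triangulations $\mathcal{T}$ of the $(n+1)$-gon are equivalent to one another; they do not show that any one of these maps is an equivalence, which is an independent condition for each $n$. So if you wrote up only the $n=3$ cases you would have proved associativity of the Hall multiplication but not the full $2$-Segal property (in particular not the higher coherences used, e.g., for the algebra object in spans). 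Keep the direct argument for general $(n,i,j)$; it is the proof.
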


The proof of Theorem \ref{thm:Wald2Seg} is elementary, effectively reducing to the Third Isomorphism Theorem for $\CC$. The proof extends with only minor changes to the setting in which $\CC$ is an exact $\infty$-category; see \cite[Theorem 7.3.3]{dyckerhoff2019}.

\section{Hall algebras via $2$-Segal spaces}\label{twosec}
\label{sec:hallAlg2Seg}

Throughout this section $X_{\bullet}: \Delta^{\op} \rightarrow \spaces$ denotes a $2$-Segal space.

\subsection{An algebra in spans of groupoids}
\label{sec:algSpan}

In this section we take $\spaces$ to be the model category of small groupoids; this suffices for all explicit examples below. Modifications (and strengthenings of) the content of this section when $\spaces$ is an $\infty$-category can be found in \cite{stern2021}.

Let $\Span(\spaces)$ be the category of spans in $\spaces$. An object of $\Span(\spaces)$ is an object of $\spaces$. A morphism $A \rightarrow B$ in $\Span(\spaces)$ is an equivalence class of diagrams $A \leftarrow V \rightarrow B$ in $\spaces$, with two diagrams $A \leftarrow V \rightarrow B$ and $A \leftarrow V^{\prime} \rightarrow B$ being equivalent if there exists an equivalence $V \rightarrow V^{\prime}$ making the obvious diagrams commute. A monoidal structure on $\Span(\spaces)$ is induced by Cartesian product.

Let $m: X_1 \times_{X_0} X_1 \rightarrow X_1$ be the following morphism in $\Span(\spaces)$:
\begin{equation}
\label{eq:multSpan}
\begin{tikzpicture}[baseline= (a).base]
\node[scale=1.0] (a) at (0,0){
\begin{tikzcd}
{} & X_2 \arrow{dl}[above left]{(\partial_2,\partial_0)} \arrow{dr}[above right]{\partial_1} & {} \\
X_{\{0,1\}} \times_{X_{\{1\}}} X_{\{1,2\}} & {} & X_{\{0,1\}}.
\end{tikzcd}
};
\end{tikzpicture}
\end{equation}
The composition $m \circ (m \times \id_{X_1})$ corresponds to the outside span in the diagram
\begin{equation}
\label{diag:assSpanPre}
\begin{tikzpicture}[baseline= (a).base]
\node[scale=0.60] (a) at (0,0){
\begin{tikzcd}
{} & {} & K_{m \circ (m \times \id_{X_1})} \arrow{dl} \arrow{dr}  \arrow[d, phantom, "\usebox\pullbackvert",near start, color=black] & {} & {} \\
{} & X_{\{0,1,2\}} \times_{X_{\{2\}}} X_{\{2,3\}} \arrow{dl} \arrow{dr}& {} & X_{\{0,2,3\}} \arrow{dl} \arrow{dr} & {} \\
X_{\{0,1\}} \times_{X_{\{1\}}} X_{\{1,2\}} \times_{X_{\{2\}}} X_{\{2,3\}} & {} & X_{\{0,2\}} \times_{X_{\{2\}}} X_{\{2,3\}} & {} & X_{\{0,3\}}.
\end{tikzcd}
};
\end{tikzpicture}
\end{equation}
The carrot in the top corner indicates that the square is a pullback. The apex $K_{m \circ (m \times \id_{X_1})}$ is the middle term in the sequence
\[
X_3
\xrightarrow[]{f}
K_{m \circ (m \times \id_{X_1})}
\xrightarrow[]{g}
X_{\{0,1,2\}} \times_{X_{\{0,2\}}} X_{\{0,2,3\}},
\]
where $f$ is induced by the canonical maps $X_3 \rightarrow X_{\{0,1,2\}} \times_{X_{\{2\}}} X_{\{2,3\}}$ and $X_3 \rightarrow X_{\{0,2,3\}}$. The map $g$ is clearly an equivalence and the composition $g \circ f$ is an equivalence by the $2$-Segal property of $X_{\bullet}$; take $n=3$, $i=0$, $j=2$ in Definition \ref{def:2Segal}. The 2-out-of-3 property of equivalences therefore implies that $f$ is an equivalence. Hence, the diagram \eqref{diag:assSpanPre} is equivalent
\begin{equation}
\label{diag:assSpan}
\begin{tikzpicture}[baseline= (a).base]
\node[scale=0.60] (a) at (0,0){
\begin{tikzcd}
{} & {} & X_3 \arrow{dl}[above]{\gamma^{\prime}} \arrow{dr}[above]{\beta^{\prime}} \arrow[d, phantom, "\usebox\pullbackvert",near start, color=black] & {} & {} \\
{} & X_{\{0,1,2\}} \times_{X_{\{2\}}} X_{\{2,3\}} \arrow{dl}[above]{\alpha} \arrow{dr}[above]{\beta} & {} & X_{\{0,2,3\}} \arrow{dl}[above]{\gamma} \arrow{dr}[above]{\delta} & {} \\
X_{\{0,1\}} \times_{X_{\{1\}}} X_{\{1,2\}} \times_{X_{\{2\}}} X_{\{2,3\}} & {} & X_{\{0,2\}} \times_{X_{\{2\}}} X_{\{2,3\}} & {} & X_{\{0,3\}}.
\end{tikzcd}
};
\end{tikzpicture}
\end{equation}
It follows that $m \circ (m \times \id_{X_1})$ is equal to the span
\begin{equation}
\label{eq:assSpanFinal}
\begin{tikzpicture}[baseline= (a).base]
\node[scale=1.0] (a) at (0,0){
\begin{tikzcd}
{} & X_3 \arrow{dl}[above left]{\alpha \circ \gamma^{\prime}} \arrow{dr}[above right]{\delta \circ \beta^{\prime}} & {} \\
X_{\{0,1\}} \times_{X_{\{1\}}} X_{\{1,2\}} \times_{X_{\{2\}}} X_{\{2,3\}} & {} & X_{\{0,3\}}.
\end{tikzcd}
};
\end{tikzpicture}
\end{equation}
Similarly, the composition $m \circ (\id_{X_1} \times m)$ is given by the outside span in the diagram
\begin{equation}
\label{diag:assSpan2}
\begin{tikzpicture}[baseline= (a).base]
\node[scale=0.60] (a) at (0,0){
\begin{tikzcd}
{} & {} & K_{m \circ (\id_{X_1} \times m)} \arrow{dl} \arrow{dr} \arrow[d, phantom, "\usebox\pullbackvert",near start, color=black] & {} & {} \\
{} & X_{\{0,1\}} \times_{X_{\{1\}}} X_{\{1,2,3\}} \arrow{dl} \arrow{dr} & {} & X_{\{0,1,3\}} \arrow{dl} \arrow{dr} & {} \\
X_{\{0,1\}} \times_{X_{\{1\}}} X_{\{1,2\}} \times_{X_{\{2\}}} X_{\{2,3\}} & {} & X_{\{0,1\}} \times_{X_{\{1\}}} X_{\{1,3\}} & {} & X_{\{0,3\}}.
\end{tikzcd}
};
\end{tikzpicture}
\end{equation}
Arguing as above, this time using the case $n=3$, $i=1$, $j=3$ of Definition \ref{def:2Segal}, the outside span is again seen to equal \eqref{eq:assSpanFinal}.

Summarizing, the pair $(X_1,m)$ is an associative algebra object in $\Span(\spaces)$, associativity resulting from the first non-trivial $2$-Segal conditions, that is, the conditions corresponding to $n=3$ in Definition \ref{def:2Segal}.

%
%

\subsection{Linearization via theories with transfer}
\label{sec:trans}

Following \cite[\S 8.1]{dyckerhoff2019}, we construct algebras from a $2$-Segal space $X_{\bullet}$. When $\spaces$ is the category small groupoids, this can be seen as linearization of the algebra object $(X_1,m)$ in $\Span(\spaces)$ constructed in Section \ref{sec:algSpan}.

\begin{definition}
Subclasses $\sm$ and $\pr$ of morphisms in $\spaces$ are called \emph{smooth} and \emph{proper}, respectively, if the following properties hold:
\begin{enumerate}
\item Each subclass contains all weak equivalences and is closed under composition and Cartesian product. In particular, $\sm$ and $\pr$ may be seen as subcategories of $\spaces$ which contain all objects.
\item For each Cartesian diagram
\begin{equation}
\label{eq:BBSquare}
\begin{tikzpicture}[baseline= (a).base]
\node[scale=1.0] (a) at (0,0){
\begin{tikzcd}
{} & X \arrow{dl}[above left]{s^{\prime}} \arrow{dr}[above right]{p^{\prime}} & {} \\
Y \arrow{dr}[below left]{p} & {} & Z \arrow{dl}[below right]{s} \\
{} & W& {}
\end{tikzcd}
};
\end{tikzpicture}
\end{equation}
in $\CC$ with $s \in \sm$ and $p \in \pr$, we have $s^{\prime} \in \sm$ and $p^{\prime} \in \pr$.
\end{enumerate}
\end{definition}

Let $\mathcal{V}$ be a monoidal category with monoidal unit $\mathbb{I}_{\mathcal{V}}$.

\begin{definition}
A \emph{theory with transfer on $\spaces$ valued in $\mathcal{V}$} is the data of a covariant functor $(-)_*: \sm \rightarrow \mathcal{V}$ and a contravariant functor $(-)^*: \pr \rightarrow \mathcal{V}$ with the following properties:
\begin{enumerate}
\item Both $(-)_*$ and $(-)^*$ send weak equivalences in $\spaces$ to isomorphisms in $\mathcal{V}$.
\item $X_*=X^*$ for all objects $X \in \spaces$; denote this common object by $\lin(X)$.
\item Monoidal data: there are isomorphisms $\lin(\pt) \xrightarrow[]{\sim} \mathbb{I}_{\mathcal{V}}$ and
\[
\{m_{X,Y}: \lin(X) \otimes \lin(Y) \xrightarrow[]{\sim} \lin(X \times Y)\}_{X,Y \in \spaces}
\]
which are natural with respect to $(-)^*$ and $(-)_*$ and satisfy the standard associativity and unit coherence conditions.
\item Given a Cartesian diagram \eqref{eq:BBSquare} with $s \in \sm$ and $p \in \pr$, the Beck--Chevalley-type equality
\[
s^* \circ p_* = p^{\prime}_* \circ s^{\prime *}
\]
of morphisms $\lin(Y) \rightarrow \lin(Z)$ holds.
\end{enumerate}
\end{definition}

For ease of notation, we denote a theory with transfer simply by $\lin$.

Recall again the span \eqref{eq:multSpan} which defines the multiplication $m$ on $X_1$.
Consider also the span
\begin{equation}
\label{eq:unitSpan}
\begin{tikzpicture}[baseline= (a).base]
\node[scale=1.0] (a) at (0,0){
\begin{tikzcd}
{} & X_0 \arrow{dl}[above left]{\can} \arrow{dr}[above right]{s_0} & {} \\
\pt & {} & X_1,
\end{tikzcd}
};
\end{tikzpicture}
\end{equation}
where $\can$ is the canonical map.

\begin{theorem}[{\cite[Proposition 8.1.5]{dyckerhoff2019}}]
\label{thm:linearize}
Let $\lin$ be a theory with transfer on $\spaces$ valued in $\mathcal{V}$ and $X_{\bullet}: \Delta^{\op} \rightarrow \spaces$ a $2$-Segal space. If $(\partial_2,\partial_0) \in \sm$ and $\partial_1 \in \pr$, then $\Hall(X_{\bullet}; \lin) := \lin(X_1)$ is an associative algebra object in $\mathcal{V}$ with multiplication
\[
\lin(X_1) \otimes \lin(X_1) \simeq \lin(X_1 \times X_1) \xrightarrow[]{\partial_{1*} \circ (\partial_2,\partial_0)^*} \lin(X_1)
\]
induced by the span \eqref{eq:multSpan}. If, moreover, $\can \in \sm$ and $s_0 \in \pr$, then the morphism
\[
\mathbb{I}_{\mathcal{V}} \xrightarrow[]{\sim} \lin(\pt) \xrightarrow[]{s_{0 *} \circ \can^*} \lin(X_1)
\]
induced by the span \eqref{eq:unitSpan} is a unit for the algebra $\Hall(X_{\bullet}; \lin)$.
\end{theorem}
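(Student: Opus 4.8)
The plan is to transport the span-level statements of Section~\ref{sec:algSpan} into $\mathcal{V}$ by applying $\lin$. First I would isolate the underlying formal mechanism: to a span $A \xleftarrow{s} V \xrightarrow{p} B$ in $\spaces$ with $s \in \sm$ and $p \in \pr$, assign the morphism $p_* \circ s^* \colon \lin(A) \to \lin(B)$; for the span \eqref{eq:multSpan} this is precisely the multiplication $\mu := \partial_{1 *} \circ (\partial_2,\partial_0)^*$, read through the monoidal isomorphism $m_{X_1,X_1}$. I would check that this assignment: (i) is independent of the chosen representative of the span, since $(-)_*$ and $(-)^*$ invert weak equivalences; (ii) respects Cartesian products, using the isomorphisms $m_{X,Y}$ and their naturality with respect to $(-)_*$ and $(-)^*$; (iii) sends identity spans to identity morphisms; and (iv) turns composition of composable spans into composition of morphisms. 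Claim (iv) is the heart of the matter and is exactly the Beck--Chevalley axiom: composing $A \xleftarrow{s_1} V_1 \xrightarrow{p_1} B \xleftarrow{s_2} V_2 \xrightarrow{p_2} C$ forms the pullback $V_1 \times_B V_2$ with projections $\pi_1,\pi_2$, where $\pi_1$ (a base change of $s_2$) is smooth and $\pi_2$ (a base change of $p_1$) is proper by stability of $\sm$ and $\pr$ under base change; Beck--Chevalley applied to this Cartesian square gives $s_2^* \circ p_{1*} = \pi_{2*} \circ \pi_1^*$, and functoriality of $(-)_*$ and $(-)^*$ then collapses the composite span's morphism $(p_2\pi_2)_* \circ (s_1\pi_1)^*$ to $(p_{2*} s_2^*) \circ (p_{1*} s_1^*)$. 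Thus $\lin$ defines a monoidal functor on the subcategory of $\Span(\spaces)$ whose spans have smooth left leg and proper right leg, and by hypothesis \eqref{eq:multSpan} lies in this subcategory.

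Granting this, associativity is immediate. Section~\ref{sec:algSpan} shows that both $m \circ (m \times \id_{X_1})$ and $m \circ (\id_{X_1} \times m)$, computed via the pullback diagrams \eqref{diag:assSpanPre} and \eqref{diag:assSpan2} and simplified using the $n=3$ instances of the $2$-Segal condition, coincide with the span \eqref{eq:assSpanFinal} with apex $X_3$; the auxiliary equivalences replacing the pullback apices by $X_3$ are harmless because $\lin$ inverts weak equivalences. I would verify that all legs in \eqref{diag:assSpan} and \eqref{diag:assSpan2} are smooth or proper as needed: the legs through which one pulls back ($\alpha$, $\gamma'$ and their analogues) are obtained from $(\partial_2,\partial_0)$ by Cartesian products with identities and base change, hence smooth, while the legs along which one pushes forward ($\delta$, $\beta'$ and their analogues) are obtained from $\partial_1$ likewise, hence proper. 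Applying the functor of the first paragraph then shows that $\mu \circ (\mu \otimes \id)$ and $\mu \circ (\id \otimes \mu)$ both equal the morphism induced by \eqref{eq:assSpanFinal}, the identifications with the tensored maps using the coherence isomorphisms $m_{X,Y}$; this bookkeeping is routine.

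For the unit, assume $\can \in \sm$ and $s_0 \in \pr$, so that \eqref{eq:unitSpan} lies in the relevant subcategory and yields $\eta \colon \mathbb{I}_{\mathcal{V}} \xrightarrow{\sim} \lin(\pt) \xrightarrow{s_{0*} \circ \can^*} \lin(X_1)$. To prove $\mu \circ (\eta \otimes \id) = \id = \mu \circ (\id \otimes \eta)$, I would compose, at the level of spans, the unit span with the multiplication span: the resulting pullbacks are controlled by the $n=2$ unitality squares of Definition~\ref{def:unitalSegal}, which hold because every $2$-Segal space is unital by Theorem~\ref{thm:unital}. These squares identify the apex of each composite span with $X_1$ compatibly with both legs, so each composite span is equivalent to the identity span on $X_1$; applying $\lin$ and using property (iii) above gives the two unit laws.

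The main obstacle is the verification in the first paragraph --- that $\lin$ genuinely converts span composition into morphism composition, together with checking that the smooth/proper hypotheses propagate through every base change appearing in Section~\ref{sec:algSpan}. Once this is in place, associativity and unitality are formal consequences of facts already established, namely the span-algebra structure of $(X_1,m)$ and Theorem~\ref{thm:unital}. Equivalently, one can skip the packaging into a functor and instead perform two direct diagram chases for $\mu \circ (\mu \otimes \id)$ and $\mu \circ (\id \otimes \mu)$, invoking Beck--Chevalley at each pullback square; the content is the same.
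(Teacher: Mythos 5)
Your proposal is correct and follows essentially the same route as the paper: the multiplication is $\partial_{1*}\circ(\partial_2,\partial_0)^*$, associativity is obtained by applying the Beck--Chevalley property to the Cartesian squares in diagrams \eqref{diag:assSpan} and \eqref{diag:assSpan2} (whose apices are identified with $X_3$ via the $n=3$ instances of the $2$-Segal condition), and unitality follows from Theorem \ref{thm:unital}. Your packaging of the argument as a monoidal functor on the subcategory of spans with smooth left leg and proper right leg is only a cosmetic reorganization of the paper's direct diagram chase, as you yourself observe in your closing paragraph.
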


\begin{proof}
Write $\mathfrak{m}$ for the multiplication $\lin(X_1) \otimes \lin(X_1) \rightarrow \lin(X_1)$. Keeping the notation of diagram \eqref{diag:assSpan} and using the monoidality coherence data, the composition $\mathfrak{m} \circ (\mathfrak{m} \otimes \id_{\lin(X_1)})$ is equal to
\[
\delta_* \circ \gamma^* \circ \beta_* \circ \alpha^*
=
\delta_* \circ \beta^{\prime}_* \circ \gamma^{\prime *} \circ \alpha^*
=
(\delta \circ \beta^{\prime})_* \circ (\alpha \circ \gamma^{\prime})^*.
\]
The first equality is the Beck--Chevalley property and the second is functoriality of $(-)_*$ and $(-)^*$. The final expression is the result of push-pull along the span \eqref{eq:assSpanFinal}. Similarly, the composition $\mathfrak{m} \circ ( \id_{\lin(X_1)} \otimes \mathfrak{m})$, which results from push-pull along the span \eqref{diag:assSpan2}, is equal to the composition $(\delta \circ \beta^{\prime})_* \circ (\alpha \circ \gamma^{\prime})^*$. This establishes the associativity of $\mathfrak{m}$.

By Theorem \ref{thm:unital}, the $2$-Segal space $X_{\bullet}$ is unital. Using this, it is straightforward to verify unitality of $\lin(X_1)$.
\end{proof}

\subsection{Hall algebras of finitary categories}
\label{sec:finCat}

We specialize the results of Section \ref{sec:trans} to recover the theory of Hall algebras, as formulated by Ringel \cite{ringel1990}. 

Given a small groupoid $\mathcal{A}$, let $\lin(\mathcal{A}) = \Fun_c(\pi_0 \mathcal{A}, \C)$ be the vector space of finitely supported complex valued functions on the set $\pi_0 \mathcal{A}$ of isomorphism classes of objects of $\mathcal{A}$. Let $\sm$ consist of the weakly proper morphisms, that is, morphisms of groupoids $f: \mathcal{A} \rightarrow \mathcal{B}$ for which the induced map $\pi_0 f : \pi_0 \mathcal{A} \rightarrow \pi_0 \mathcal{B}$ is finite-to-one. In this case, there is an obvious pullback morphism $f^* : \lin(\mathcal{B}) \rightarrow \lin(\mathcal{A})$.  Let $\pr$ consist of the locally proper morphisms, that is, morphisms of groupoids $f: \mathcal{A} \rightarrow \mathcal{B}$ for which, for each $a \in \mathcal{A}$, the induced group homomorphism $f: \Aut_{\mathcal{A}}(a) \rightarrow \Aut_{\mathcal{B}}(f(a))$ has finite kernel and cokernel. In this case, there is a pushforward $f_*: \lin(\mathcal{A}) \rightarrow \lin(\mathcal{B})$ given by
\[
(f_* \phi)(b) = \int_{Rf^{-1}(b)} \phi_{\vert Rf^{-1}(b)},
\]
where $Rf^{-1}(b)$ denotes the homotopy fibre of $f$ over $b \in \mathcal{B}$. Here, for a small groupoid $\mathcal{G}$ for which each group $\Aut_{\mathcal{G}}(x)$, $x \in \mathcal{G}$, is finite with function $\phi \in \lin(\mathcal{G})$, we have written
\[
\int_{\mathcal{G}} \phi = \sum_{x \in \pi_0 \mathcal{G}} \frac{1}{\vert \Aut_{\mathcal{G}} (x) \vert} \phi(x).
\]
This data defines a $\Vect_{\C}$-valued theory with transfer on the model category of small groupoids \cite[Lemma 8.2.7]{dyckerhoff2019}.

\begin{definition}
A proto-exact category $\CC$ is called \emph{finitary} if it is essentially small and, for each $U,V \in \CC$, the sets $\Hom_{\CC}(U,V)$ and $\Ext^1_{\CC}(U,V)$ are finite.
\end{definition}

Let $X_{\bullet} = \Wald_{\bullet}(\CC)$ be the Waldhausen construction of a finitary proto-exact category $\CC$, seen as a $2$-Segal simplicial groupoid. The span \eqref{eq:multSpan} takes the form
\[
\begin{tikzpicture}[baseline= (a).base]
\node[scale=1.0] (a) at (0,0){
\begin{tikzcd}[,column sep=0.25in]
{} & \Wald_2(\CC) \arrow{dl}[above left]{(\partial_2,\partial_0)} \arrow{dr}[above right]{\partial_1}& {} \\
\Wald_1(\CC) \times \Wald_1(\CC) & {} & \Wald_1(\CC)
\end{tikzcd}
};
\end{tikzpicture},
\qquad
\begin{tikzpicture}[baseline= (a).base]
\node[scale=1.0] (a) at (0,0){
\begin{tikzcd}[arrows={|->},column sep=0.2in]
{} & U \rightarrowtail V \twoheadrightarrow W \arrow{dl} \arrow{dr} & {} \\
(U,W) & {} & V.
\end{tikzcd}
};
\end{tikzpicture}
\]

\begin{lemma}
\label{lem:finImpliesSmPr}
\phantomsection
\begin{enumerate}
\item The morphism $(\partial_2,\partial_0)$ is weakly proper, that is, $(\partial_2,\partial_0) \in \sm$.
\item The morphism $\partial_1$ is locally proper, that is, $\partial_1 \in \pr$.
\end{enumerate}
\end{lemma}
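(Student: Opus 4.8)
The plan is to compute the relevant homotopy fibres and identify them with familiar finite sets or finite groups built from $\Hom$ and $\Ext^1$ groups in $\CC$, using the identification $\Wald_2(\CC) \simeq \{U \rightarrowtail V \twoheadrightarrow W\}$ recorded in \eqref{eq:walds2}. Throughout I will use that a morphism of groupoids is determined up to the relevant finiteness by its behaviour on isomorphism classes and on automorphism groups, and that homotopy fibres of maps of groupoids are computed objectwise.

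For part (1), I would fix a pair $(U,W) \in \Wald_1(\CC) \times \Wald_1(\CC)$, or rather its isomorphism class, and show the set of isomorphism classes of conflations $U \rightarrowtail V \twoheadrightarrow W$ is finite. The point is that such conflations are classified by the extension group $\Ext^1_{\CC}(W,U)$ — in the proto-exact setting this requires a word of care, but the set of isomorphism classes of conflations with fixed sub- and quotient-object is a quotient of (a subset of) $\Ext^1_{\CC}(W,U)$, which is finite by the finitary hypothesis. Hence $\pi_0 (\partial_2,\partial_0)$ has finite fibres, i.e. $(\partial_2,\partial_0)$ is weakly proper. One should also check that the middle term $V$ of any such conflation, together with the choice of inflation and deflation, ranges over only finitely many isomorphism classes; this again follows since the inflation $U \rightarrowtail V$ is determined by an element of $\Hom_{\CC}(U,V)$ and the relevant $\Hom$ and $\Ext^1$ sets are finite.

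For part (2), I would fix a conflation $\xi = (U \rightarrowtail V \twoheadrightarrow W)$ and analyze the homomorphism $\partial_1 \colon \Aut_{\Wald_2(\CC)}(\xi) \rightarrow \Aut_{\Wald_1(\CC)}(V) = \Aut_{\CC}(V)$. An automorphism of $\xi$ is a triple $(\phi_U, \phi_V, \phi_W)$ of automorphisms making the conflation diagram commute; $\partial_1$ is the projection to $\phi_V$. The kernel consists of triples $(\id_U, \id_V, \id_W)$ — actually of triples $(\phi_U,\id_V,\phi_W)$ compatible with $\id_V$, which forces $\phi_U = \id_U$ and $\phi_W = \id_W$ since $U \rightarrowtail V$ is mono and $V \twoheadrightarrow W$ is epi in the appropriate sense — so the kernel is trivial, in particular finite. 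The cokernel injects into the set of automorphisms of $V$ that fail to preserve the subobject $U$ (up to those that do), which is a subquotient of $\Aut_{\CC}(V) \subseteq \Hom_{\CC}(V,V)$; since $\Hom_{\CC}(V,V)$ is finite by hypothesis, the cokernel is finite. Thus $\partial_1$ is locally proper.

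The main obstacle is the bookkeeping in the proto-exact (non-additive) setting: one cannot simply quote the additive-category classification of extensions by $\Ext^1$, and one must argue directly from the biCartesian square axioms that (a) the fibre of $(\partial_2,\partial_0)$ is controlled by a finite set — ultimately a subquotient of $\Ext^1_{\CC}(W,U)$ — and (b) that the relevant automorphism groups are subquotients of finite $\Hom$-sets. Once the fibres are correctly identified, finiteness is immediate from the finitary hypothesis; so the real work is the identification, not any estimate.
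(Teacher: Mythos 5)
Your argument is essentially correct and identifies the right objects: the fibre of $\pi_0(\partial_2,\partial_0)$ over $([U],[W])$ is the set of isomorphism classes of conflations with those end terms, i.e.\ a quotient of $\Ext^1_{\CC}(W,U)$ by the action of $\Aut_{\CC}(U)\times\Aut_{\CC}(W)$, so weak properness in part (1) is governed by $\Ext^1$-finiteness; and local properness of $\partial_1$ requires the kernel and cokernel of $\Aut(\xi)\to\Aut_{\CC}(V)$ to be finite, for which the operative input is that $\Aut_{\CC}(V)\subseteq\Hom_{\CC}(V,V)$ is finite, i.e.\ $\Hom$-finiteness. Be aware that the paper's own proof consists of two one-line attributions that are the \emph{reverse} of yours: it cites finiteness of $\Hom_{\CC}(W,U)$ for part (1) and of $\Ext^1_{\CC}(W,U)$ for part (2). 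Your assignment of hypotheses to conclusions appears to be the correct one, and the paper's two reasons seem to have been interchanged. (The group $\Hom_{\CC}(W,U)$ does appear naturally in this picture, but as the kernel of $\Aut(\xi)\to\Aut_{\CC}(U)\times\Aut_{\CC}(W)$, so it is relevant to \emph{local} properness of $(\partial_2,\partial_0)$, which is not what part (1) asserts.)

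Two small repairs to your writeup. In part (1), the closing sentence arguing that the middle term $V$ ranges over finitely many isomorphism classes ``since the inflation $U\rightarrowtail V$ is determined by an element of $\Hom_{\CC}(U,V)$'' is circular as stated ($V$ is the unknown being bounded); it is also unnecessary, since the possible middle terms are exactly those occurring in classes of $\Ext^1_{\CC}(W,U)$, which you have already controlled. In part (2), your claim that the kernel of $\Aut(\xi)\to\Aut_{\CC}(V)$ is trivial uses that inflations are monomorphisms and deflations are epimorphisms; this holds in all the examples but is not an axiom of proto-exact categories. This does not matter for the conclusion: the kernel is in any case a subgroup of $\Aut_{\CC}(U)\times\Aut_{\CC}(W)$, which is finite by $\Hom$-finiteness, so local properness still follows.
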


\begin{proof}
\begin{enumerate}
\item This follows from the assumption that each set $\Hom_{\CC}(W,U)$ is finite. 
\item This follows from the assumption that each set $\Ext^1_{\CC}(W,U)$ is finite. \qedhere
\end{enumerate}
\end{proof}

It follows from Theorem \ref{thm:linearize} that there is a unital associative algebra
\[
\Hall(\CC) := \Hall(\Wald_{\bullet}(\CC); \lin).
\]
The algebra $\Hall(\CC)$ is the \emph{Hall algebra of $\CC$}, originally defined by Ringel \cite{ringel1990}. The vector space $\Hall(\CC)$ has a canonical basis $\{1_U\}_{U \in \pi_0 \CC}$ labelled by isomorphism classes of objects of $\CC$ in which the multiplication reads
\[
1_U \cdot 1_W = \sum_{V \in \pi_0 \CC} c_{U,W}^V 1_V.
\]
Here $c_{U,W}^V$ is the cardinality of the set 
\[
\{U^{\prime} \subset V \mid U^{\prime} \simeq U, \; V \slash U^{\prime} \simeq W \}.
\]
Alternatively, we can write
\[
c_{U,W}^V
=
\frac{\vert \Aut_{\CC}(V) \vert} {\vert \Aut_{\CC}(U) \vert \vert \Aut_{\CC}(W) \vert} \frac{\vert \Ext^1_{\CC}(W,U)_V \vert}{\vert \Hom_{\CC}(W,U) \vert},
\]
where $\Ext^1_{\CC}(W,U)_V \subset \Ext^1_{\CC}(W,U)$ denotes the subset of extensions (seen as isomorphism classes of conflations) whose middle term is isomorphic to $V$. In this form, it is apparent that the finitary assumption on $\CC$ ensures that $c_{U,W}^V$ is finite and, for fixed $U,W \in \pi_0 \CC$, is non-zero for only finitely many $V \in \pi_0 \CC$.

We work out various instances of the above construction. Let $\F_q$ be a finite field of cardinality $q$.

\begin{example}
\label{ex:pointFq}
Let $\vect_{\F_q}$ be the finitary abelian category of finite dimensional vector spaces over $\F_q$. The Hall algebra
\[
\Hall(\vect_{\F_q}) \simeq \bigoplus_{n \in \Z_{\geq 0}} \C \cdot 1_n
\]
has multiplication determined by
\[
1_n \cdot 1_m = \left[ \begin{matrix} n+m \\ n \end{matrix} \right]_q 1_{n+m},
\]
where our conventions for quantum integers read
\[
[n]_q = \frac{q^n-1}{q-1},
\qquad
[n]_q! = \prod_{i=1}^n [i]_q
\qquad
\left[ \begin{matrix} n+m \\ n \end{matrix} \right]_q
=
\frac{[n+m]_q!}{[n]_q! [m]_q!}.
\]
The structure constant $\left[ \begin{smallmatrix} n+m \\ n \end{smallmatrix} \right]_q$ is the number of $\F_q$-rational points of the Grassmannian $\textnormal{Gr}(n, n+m)$.
\end{example}
 
\begin{example}
\label{eq:Jordan}
Let $\F_q[x] \nilMod$ be the finitary abelian category of finite dimensional nilpotent $\F_q[x]$-modules. Jordan canonical form induces a bijection from the set of isomorphism classes of objects of $\F_q[x] \nilMod$ to the set $\mathcal{P}$ of partitions, so that we may identify
\[
\Hall(\F_q[x] \nilMod) \simeq \bigoplus_{\lambda \in \mathcal{P}} \C \cdot 1_{\lambda}.
\]
The structure constants $c_{\lambda,\mu}^{\nu}$ are rather complicated and do not admit a closed form expression. However, studying these coefficients when $\mu$ is of the form $(1^r)$ allows one to prove that $\Hall(\F_q[x] \nilMod)$ is a polynomial algebra on the countably infinite variables $1_{(1^r)}$, $r \geq 1$.
Using this, it is not difficult to show that $\Hall(\F_q[x] \nilMod)$ is isomorphic to Macdonald's ring of symmetric functions \cite{macdonald1995}, an important algebra in combinatorics. 

Taking $q$ to be a prime $p$, the algebra $\Hall(\F_p[x] \nilMod)$ is isomorphic to the original Hall algebra, introduced by Steinitz in the context of the combinatorics of the lattice of finite $p$-abelian groups  \cite{steinitz1901}.
\end{example}

\begin{example}[{\cite{ringel1990b,green1995}}]
\label{ex:quiverFq}
Let $Q$ be an acyclic quiver, that is, a finite oriented multigraph without cycles. Let $\rep_{\F_q}(Q)$ be the category of finite dimensional representations of $Q$ over $\F_q$. Concretely, a representation of $Q$ is the data of a finite dimensional vector space $V_i$ over $\F_q$ for each vertex $i$ of $Q$ and a $\F_q$-linear map $V_i \rightarrow V_j$ for each arrow $i \rightarrow j$ of $Q$. The category $\CC$ is finitary abelian. Denote by $S_i$ the one dimensional representation which is supported only at the vertex $i$. Any simple object of $\CC$ is isomorphic to a unique $S_i$.

Let $\mathfrak{g}_Q$ be the complex generalized Ka\u{c}--Moody algebra whose generalized Dynkin diagram is the underlying multigraph of $Q$. There is a bijection between the vertices of $Q$ and the set $\Pi^+$ of positive simple roots of $\mathfrak{g}_Q$. Let $U_v(\mathfrak{g}_Q)$ be the De Concini--Ka\v{c} quantum group at quantum parameter $v \in \C^{\times}$. The algebra $U_v(\mathfrak{g}_Q)$ has a standard presentation by generators and relations; Chevalley generators $E_i, F_i$, $i \in \Pi^+$, and group-like Cartan generators $K^{\pm 1}_{\alpha}$, $\alpha$ a simple coroot. Let $U_v^+(\mathfrak{g}_Q)$ be the subalgebra of $U_v(\mathfrak{g}_Q)$ generated by $E_i$; its only relations are the quantum Serre relations.

It was proved by Ringel and Green that the assignment $E_i \mapsto 1_{S_i}$ extends to an algebra embedding
\begin{equation*}
U_{\sqrt{q}}^+(\mathfrak{g}_Q) \hookrightarrow \Hall(\rep_{\F_q}(Q))
\end{equation*}
which is an isomorphism if and only if $Q$ is of finite representation type, that is, $Q$ is an orientation of a simply laced Dynkin diagram \cite{ringel1990b,green1995}. The algebra $H(\rep_{\F_q}(Q))$ itself is in fact a quantization of a $q$-dependent Borcherds algebra \cite{sevenhant2001}. For a general discussion of the relationship between Hall algebras of finitary exact categories and quantum groups, see \cite{berenstein2016}.

More generally, if $Q$ has cycles (including loops), one can consider the category of representations of $Q$ with nilpotency conditions on morphisms associated to cycles. For example, the category $\rep^{\nil}_{\F_q}(Q_{\Jor})$ of nilpotent representations of the Jordan quiver $Q_{\Jor}$ (a single vertex with a loop) is equivalent to the category $\F_q[x] \nilMod$ considered in Example \ref{eq:Jordan}. In this way, one can expand the class of generalized quantum Ka\u{c}--Moody algebras realized by Hall algebras \cite{kang2006}. 

The theory of Hall algebras of quivers is rich and interesting. A detailed introduction can be found in \cite{schiffmann2012b}.
\end{example}

\begin{example}[\cite{kapranov1997}]
\label{ex:curveFq}
Let $X$ be a smooth projective irreducible curve over a finite field $\F_q$. The category $\coh(X)$ of coherent sheaves on $X$ is finitary abelian and hence has an associated Hall algebra $H(X) := H(\coh(X))$.

The case of $X=\mathbb{P}^1$, the projective line over $\F_q$, was studied by Kapranov \cite{kapranov1997}. See \cite{baumann2001} for an expository account. To explain the main result, note that the full subcategory $\tor(\mathbb{P}^1) \subset \coh(\mathbb{P}^1)$ of torsion sheaves is abelian and closed under extensions. Using functoriality of the Hall algebra construction, as discussed in Section \ref{sec:functoriality}, this translates to define a subalgebra $H(\tor(\mathbb{P}^1))$ of $H(\mathbb{P}^1)$. Consideration of the support of a torsion sheaf leads to a decomposition
\[
\tor(\mathbb{P}^1) = \bigoplus_{x \in \mathbb{P}^1} \tor(\mathbb{P}^1)_x
\]
labelled by the closed points of $\mathbb{P}^1$ and hence an isomorphism of algebras
\[
H(\tor(\mathbb{P}^1)) \simeq \bigotimes_{x \in \mathbb{P}^1} H(\tor(\mathbb{P}^1)_x).
\]
The abelian equivalence $\tor(\mathbb{P}^1)_x \simeq \rep^{\nil}_{\F_{q^{\deg x}}}(Q_{\Jor})$ yields, via Example \ref{eq:Jordan}, a description of $H(\tor(\mathbb{P}^1))$ in terms of algebras of symmetric functions. Similarly, the subcategory $\vect(\mathbb{P}^1) \subset \coh(\mathbb{P}^1)$ of vector bundles and their morphisms of constant rank is exact and closed under extensions. The resulting subalgebra $H(\vect(\mathbb{P}^1))$ of $H(\mathbb{P}^1)$ can be interpreted as the algebra of unramified automorphic forms for the groups $GL_n$, $n \geq 0$, with parabolic induction as the product. Finally, that the pair $(\tor(\mathbb{P}^1),\vect(\mathbb{P}^1))$ is a torsion theory for $\coh(\mathbb{P}^1)$ translates to the statement that $\Hall(\mathbb{P}^1)$ is a semi-direct product of $H(\tor(\mathbb{P}^1))$ and $H(\vect(\mathbb{P}^1))$ with respect to the Hecke action of the former on the latter. By taking a subalgebra $\tilde{H}(\tor(\mathbb{P}^1))$ of $H(\tor(\mathbb{P}^1))$ obtained as a suitable average of $H(\tor(\mathbb{P}^1))_x$, $x \in \mathbb{P}^1$, it is proved that there is an algebra isomorphism
\[
\tilde{H}(\tor(\mathbb{P}^1)) \rtimes H(\vect(\mathbb{P}^1))
\simeq
U_{\sqrt{q}}^{D,+}(\widehat{\mathfrak{sl}_2(\C)})
\]
and hence a non-surjective algebra embedding
\[
U_{\sqrt{q}}^{D,+}(\widehat{\mathfrak{sl}_2(\C)}) \hookrightarrow \Hall(\mathbb{P}^1).
\]
Here $U_{\sqrt{q}}^{D,+}(\widehat{\mathfrak{sl}_2(\C)})$ denotes the positive part of the quantum affine algebra $U_v(\widehat{\mathfrak{sl}_2(\C)})$ in Drinfeld's new realization \cite{drinfeld1987}, specialized to $v = \sqrt{q}$. This  realization is particularly natural when $\widehat{\mathfrak{sl}_2(\C)}$ is constructed from the loop algebra of $\mathfrak{sl}_2(\C)$.

On the other hand, denoting by $K_2$ the Kronecker quiver (vertex set $\{1,2\}$ with two arrows from $1$ to $2$), Example \ref{ex:quiverFq} asserts that $\Hall(\rep_{\F_q}(K_2))$ contains as a subalgebra the positive part of the standard Chevalley presentation of $U_{\sqrt{q}}(\widehat{\mathfrak{sl}_2(\C)})$. To explain this connection, note that while the abelian categories $\coh(\mathbb{P}^1_{\slash \F_q})$ and $\rep_{\F_q}(K_2)$ are not equivalent---the latter is finite length, the former is not---Beilinson proved that there is an equivalence of their derived categories \cite{beilinson1978}:
\begin{equation}
\label{eq:beilinsonEquiv}
D^b(\coh(\mathbb{P}^1_{\slash \F_q})) \simeq D^b(\rep_{\F_q}(K_2)).
\end{equation}
This leads to the idea that there should exist a derived generalization of the Hall algebra construction which, when applied to the category \eqref{eq:beilinsonEquiv}, realizes the full quantum group $U_{\sqrt{q}}(\widehat{\mathfrak{sl}_2(\C)})$, whereas the hearts $\rep_{\F_q}(K_2)$ and $\coh(\mathbb{P}^1_{\slash \F_q})$ realize only (non-conjugate) positive parts thereof. See \cite{peng2000,cramer2010,bridgeland2013} for work in this general direction.

Hall algebras of elliptic curves have also been studied in detail; see \cite{burban2012,schiffmann2012c}. Hall algebras of higher genus curves remain largely mysterious; see \cite{kapranov2017}.
\end{example}

\begin{remark}
In many of the examples above, the Hall algebras arise from $\F_q$-reductions of a category $\CC_{\Z}$ defined over $\Z$. What is often called the Hall algebra is an algebra $A$ over the Laurent polynomial ring which interpolates between the reduction:
$$\Hall(\CC_{\Z} \otimes_{\Z}\F_q) \simeq A\otimes_{\Z[q,q^{-1}]}\C.$$ 
One important subtlety in the subject is that the existence of such an algebra $A$ is not guaranteed.
\end{remark}

We turn to examples of Hall algebras arising from non-exact categories.

\begin{example}\label{ex:pointF1}
The proto-exact category $\vect_{\F_1}$ of finite dimensional vector spaces over $\F_1$, as defined in Example \ref{eq:F1Vect}, is finitary. The Hall algebra
\[
\Hall(\vect_{\F_1}) \simeq \bigoplus_{n \in \Z_{\geq 0}} \C \cdot 1_n
\]
has multiplication
\[
1_n \cdot 1_m = \left( \begin{matrix} n+m \\ n \end{matrix} \right) 1_{n+m}.
\]
This algebra is sometimes called the {\em divided power algebra} because of the identification with the integral subring $\mathbb{Z}\langle\frac{x^n}{n!} : n\geq 0\rangle \subset \C[x]$. The structure constant $\left( \begin{smallmatrix} n+m \\ n \end{smallmatrix} \right)$ is the Euler characteristic of the complex Grassmannian $\mbox{Gr}(n,n+m)$. In particular, this example is a $q \rightarrow 1$ limit of Example \ref{ex:pointFq} in a rather obvious sense.
\end{example}

\begin{example}[\cite{szczesny2012}]
\label{ex:quiverF1}
Let $\rep_{\F_1}(Q)$ be the category of representations of an acyclic quiver $Q$ in $\vect_{\F_1}$. The category $\CC$ inherits a proto-exact structure from $\vect_{\F_1}$ and is finitary. The Hall algebra $H(\rep_{\F_1}(Q))$ was studied in detail by Szczesny, who proved that the assignment $E_i \mapsto 1_{S_i}$ extends to an algebra homomorphism
\begin{equation}
\label{eq:uniEnvelMap}
U^+ (\mathfrak{g}_Q) \rightarrow \Hall(\rep_{\F_1}(Q)).
\end{equation}
Here $U^+ (\mathfrak{g}_Q)$ denotes the positive part of the (classical) enveloping algebra of $\mathfrak{g}_Q$. In general, the map \eqref{eq:uniEnvelMap} is neither surjective nor injective, even for Dynkin quivers, in contrast to Example \ref{ex:quiverFq}. To see the basic problem, let $Q$ be the inward orientation of the Dynkin diagram of type $D_4$, so that $\mathfrak{g}_Q \simeq \mathfrak{so}_8(\C)$. When $q$ is a power of a positive prime, the isomorphism class of the indecomposable representation
\[
\begin{tikzpicture}[scale=0.7]
    \node (A) at (0,0) {$\F_q^2$};
    \node (B) at (0,2.5) {$\F_q$};
    \node (C) at (-2.5,0) {$\F_q$};
    \node (D) at (2.5,0) {$\F_q$};
    \path [->] (B) edge node[left] {$\left(\begin{smallmatrix} 1 \\ 1 \end{smallmatrix}\right)$} (A);
    \path [->](C) edge node[above] {$\left(\begin{smallmatrix} 1 \\ 0 \end{smallmatrix}\right)$} (A);
    \path [->](D) edge node[above] {$\left(\begin{smallmatrix} 0 \\ 1 \end{smallmatrix}\right)$} (A);
\end{tikzpicture}
\]
realizes a non-simple positive root of $U_{\sqrt{q}}^+(\mathfrak{so}_8(\C))$. However, observe that this representation is not defined over $\F_1$: the matrix $\left(\begin{smallmatrix} 1 \\ 1 \end{smallmatrix}\right)$ does not represent a morphism in $\vect_{\F_1}$. In this example, non-surjectivity of the map \eqref{eq:uniEnvelMap} can be traced back to this observation. On the other hand, if $Q$ is an orientation of a Dynkin quiver of type $A$, where all indecomposable representations can be realized using only identity and zero matrices, the map \eqref{eq:uniEnvelMap} is an isomorphism. This illustrates the difficulty in establishing a precise relationship between $\Hall(\rep_{\F_1}(Q))$ and a $q \rightarrow 1$ limit of $\Hall(\rep_{\F_q}(Q))$.
\end{example}

\begin{example}[\cite{szczesny2012}]
\label{ex:curveF1}
Let $\F_1[t]$ be the free unital monoid with $0$ generated by $t$ and $\F_1[t,t^{-1}]$ its localization at the origin. The affine line $\mathbb{A}^1$, seen as a monoid scheme, is $\Spec \F_1[t]$. Then $\mathbb{P}^1_{\slash \F_1}$, the \emph{projective line over $\F_1$}, is the pushout of the diagram obtained by applying $\Spec$ to the diagram
\[
\begin{tikzpicture}[baseline= (a).base]
\node[scale=1] (a) at (0,0){
\begin{tikzcd}
& \F_1[t] \arrow[hook]{d}\\
\F_1[t] \arrow[hook]{r} & \F_1[t,t^{-1}].
\end{tikzcd}
};
\end{tikzpicture}
\]

The category $\coh(\mathbb{P}^1_{\slash \F_1})$ of (normal) coherent sheaves on $\mathbb{P}^1_{\slash \F_1}$ is proto-exact and finitary. Arguing much as in the case of $\mathbb{P}^1_{\slash \F_q}$, Szczesny proved that $H(\coh(\mathbb{P}^1_{\slash \F_1}))$ contains as a subalgebra a non-standard Borel subalgebra of $U(\mathcal{L} \mathfrak{sl}_2(\C))$. In fact, there is an isomorphism
\[
U(\mathcal{L} \mathfrak{gl}_2(\C)^+ \oplus \kappa)
\simeq
H(\coh(\mathbb{P}^1_{\slash \F_1})),
\]
where $\kappa$ is the abelian Lie algebra on generators $c_n$, $n \in \Z_{\geq 0}$. Geometrically, $c_n$ corresponds to the cyclic sheaf which globalizes the $\F_1[t]$-module $C_n=\{ \{z_i\}_{i \in \Z \slash n \Z},*\}$ on which $t$ acts by $t \cdot z_i = z_{i+1}$ and $0$ sends everything to $*$.

Elliptic Hall algebras over $\F_1$ were studied by Yanagida \cite{yanagida2017}, who proved that they recover the specialized elliptic Hall algebras of Morton--Samuelson \cite{morton2017} and hence skein algebras of tori. Over $\F_1$, the curve case stops here: curves of genus greater than one do not have monoid scheme models.

In a different direction, Hall algebras of toric monoid schemes were studied in \cite{szczesny2018,jun2024}. This is one approach to the largely unexplored theory of Hall algebras of higher dimensional varieties.
\end{example}

\begin{example}[\cite{eppolito2020}]
Let $\Mat$ be the category of pointed matroids and their strong maps. Given a matroid $M$, denote by $E_M$ the union of the non-zero elements of its ground set with a marked point $*$. Given a subset $S \subset E_M$, write $M_{\vert S}$ and $M \slash S$ for the restriction and contraction of $M$ along $S$, respectively. The category $\Mat$ admits a finitary proto-exact structure in which inflations and deflations are strong maps $f: N \rightarrow M$ which factor as $N \xrightarrow[]{\sim} M_{\vert S} \hookrightarrow M$ and $N \twoheadrightarrow N \slash S \xrightarrow[]{\sim} M$ for some subset $S$ of $E_M$ and $E_N$, respectively. The Hall algebra $H(\Mat)$ is a (Hopf) algebra dual of Schmitt's matroid-minor Hopf algebra \cite{schmitt1994}, which is of central importance in algebraic combinatorics. This algebra is not obviously related to a quasi-classical limit of some other algebra, in contrast to Examples \ref{ex:pointF1}-\ref{ex:curveF1}.

Relatedly, the proto-exact structure on $\Mat$ allows for the definition of its algebraic $K$-theory $K_{\bullet}(\Mat)$ via the isomorphism \eqref{eq:WaldK}. It is proved that this $K$-theory naturally contains as a subgroup the stable homotopy groups $\pi_{\bullet}^s(\mathbb{S})$ of the sphere spectrum.
\end{example}

We end this section with another choice of theory with transfer $\lin$.

\begin{example}
\label{ex:greenZele}
Let $\lin(-) = K_0([-,\vect_{\C}]_c)$ be the functor which assigns to an essentially small groupoid the Grothendieck group of its category of finitely supported complex representations; see \cite[\S 8.3]{dyckerhoff2019} for a full description of $\lin$ as a theory with transfer. The Hall algebra $H(\Wald_{\bullet}(\vect_{\F_q});\lin)$ is isomorphic to
\[
\bigoplus_{n=0}^{\infty} K_0(\rep_{\C}(\textnormal{GL}_n(\F_q)))
\]
with parabolic induction as the product. This algebra was studied in detail by Green \cite{green1955} and Zelevinsky \cite{zelevinsky1981}, who proved that it is a polynomial algebra on the cuspidal representations of the tower of groups $\textnormal{GL}_{n}(\F_q)$, $n \geq 0$.
\end{example}

\subsection{A non-finitary example: the cohomological Hall algebra}
\label{sec:CoHA}

Consider the non-finitary abelian category $\rep_{\C}(Q)$ of finite dimensional complex representations of a quiver $Q$. We consider $\Wald_{\bullet}(\rep_{\C}(Q))$ as a simplicial locally finite Artin stack.

Let $\ell(-) = H^{\bullet}(-;\Q)$ be singular cohomology. Applied to global quotient Artin stacks, $H^{\bullet}(-;\Q)$ is equivariant cohomology. The resulting Hall algebra $H(Q) := H(\Wald_{\bullet}(\rep_{\C}(Q));\lin)$, called the \emph{cohomological Hall algebra} of $Q$, was introduced by Kontsevich and Soibelman as a foundational object in Donaldson--Thomas theory \cite{kontsevich2011}, that is, the virtual counting of stable objects in $3$-dimensional Calabi--Yau categories. To be concrete, consider the case of the quiver $Q_m$ with a single node and $m \geq 0$ loops. In this case, there is an isomorphism
\[
\Wald_1(\rep_{\C}(Q_m))
\simeq
\bigsqcup_{d=0}^{\infty} [\mathfrak{gl}_d(\C)^{\oplus m} \slash \textnormal{GL}_d(\C)]
\]
with the stack of $m$-tuples of square matrices under simultaneous conjugation. The underlying $\Z \times \Z$-graded vector space of $H(Q_m)$ is
\[
H(Q_m) = \bigoplus_{d=0}^{\infty} H_d(Q_m)
\]
where
\[
H_d(Q_m) \simeq H^{\bullet}(B \textnormal{GL}_d(\C)) \simeq \Q[x_1,\dots,x_d]^{\mathfrak{S}_d},
\qquad
\vert x_i \vert = 2.
\]
The ($m$-dependent product) of $f_1 \in H_{d^{\prime}}(Q_m)$ and $f_2 \in H_{d^{\prime \prime}}(Q_m)$ reads
\[
f_1 \cdot f_2 = \sum_{\pi \in \mathfrak{sh}_{d^{\prime}, d^{\prime \prime}}} \pi \Big( f_1(x^{\prime}_1, \dots, x^{\prime}_{d^{\prime}})  f_2(x^{\prime \prime}_1, \dots,  x^{\prime \prime}_{d^{\prime \prime}})  \prod_{l=1}^{d^{\prime \prime}}  \prod_{k=1}^{d^{\prime}} ( x^{\prime \prime}_l - x^{\prime}_k )^{m-1} \Big),
\]
where $\mathfrak{sh}_{d^{\prime}, d^{\prime \prime}} \subset \mathfrak{S}_{d^{\prime} + d^{\prime \prime}}$ is the subset of the symmetric group on $d^{\prime}+d$ letters consists of shuffles of type $(d^{\prime}, d^{\prime \prime})$. By a direct study of this multiplication, Efimov proved that there exists a $\Z \times \Z$-graded vector space of the form $V_{Q_m}= V^{\prim}_{Q_m} \otimes \Q[u]$ and a graded algebra isomorphism\footnote{We are ignoring a dimension dependent shift (involving the Euler form) of the cohomological grading of $H(Q_m)$ which is required for this graded algebra isomorphism.} $\Sym(V_{Q_m}) \simeq H(Q_m)$ \cite{efimov2012}. Moreover, Efimov proved that each homogeneous summand $V^{\prim}_{Q_m,d} \subset V^{\prim}_{Q_m}$ is finite dimensional. The vector space $V^{\prim}_{Q_m}$ is the \emph{cohomological Donaldson--Thomas invariant} of $Q_m$. Its graded finite dimensionality was expected in view of its conjectured geometric interpretation. Indeed, it was later proved that $V^{\prim}_{Q_m}$ encodes the intersection cohomology groups of the closure of the moduli space of stable representations of $Q_m$ in the moduli space of semistable representations \cite{franzen2018,meinhardt2019}. In this way, structure theorems about the Hall algebra of a quiver translate to geometric properties of its moduli space of representations. These structural and geometric results generalize to symmetric quivers $Q$ with potential $W$ via \emph{critical} cohomological Hall algebras \cite{kontsevich2011,davison2017,davison2020}, where $V^{\prim}_{(Q,W)}$ is also given an interpretation of the long-sought Lie algebra of BPS states \cite{harvey1998}.

The linearization $\lin$ can be varied in a number of ways. For example, taking $\ell$ to be so-called motivic functions yields the \emph{motivic Hall algebra} of $Q$ \cite{joyce2007,bridgeland2012}. This is an extremely large algebra---much larger than the cohomological Hall algebra---which encodes the motivic Donaldson--Thomas invariants of $Q$ \cite{joyce2012} and has recently found connections to vertex operator algebras \cite{joyce2018}; see \cite{latyntsev2021} for similar connections in the cohomological setting.

\section{Functoriality between Hall algebras}
\label{sec:functoriality}

One advantage of using $2$-Segal spaces as a framework for understanding Hall
algebras is that clearer statements concerning functorality of the Hall algebra construction can be
introduced.  The purpose of this section is to survey materials which relate
to questions of functoriality, providing some details and highlights which
may be missing, but otherwise sourcing existent literature. 
The results in this section were understood by some of the experts at the conference which gave rise to this collection of surveys. This section was added because the statements were not contained the literature at the time.
A summary of statements pertaining to Hall (co)algebras is found in Theorem \ref{FunSummary} below.

Although we have adapted notations to agree with the rest of this document, the technical background for the exposition in this section largely follows from \cite{galvez2018}. In particular, we work with the skeleton of the simplicial category $\Delta$ and simplicial objects $X_\bullet : \Delta^{\op}\to \spaces$ take values in the large $\infty$-category of $\infty$-groupoids (as modeled by Kan complexes). 

Every map $f : [k]\to[n]$ in $\Delta$ admits a factorization $f=\alpha\circ \iota$ into an active map $\alpha$ and an inert map $\iota$. {\em
  Active maps} $\alpha : [k]\to [n]$ are those which satisfy $\alpha(0) = 0$
and $\alpha(k) = n$; they are generated by the codegeneracies $s^i :
[n+1]\to [n]$ and the inner coface maps $d^i :[n-1]\to [n]$, $0< i<
n$. {\em Inert maps} $\iota : [m]\to [k]$ are those which preserve distances
in the sense that $\iota(j+1) = \iota(j)+1$ for $0\leq j < m$.


Recall that a map $F : Y_\bullet \to X_\bullet$ between simplicial objects is {\em Cartesian} with respect to a map $f : [m]\to [n]$ in $\Delta$ if the diagram below is Cartesian.
\[
\begin{tikzpicture}[baseline= (a).base]
\node[scale=1.0] (a) at (0,0){
\begin{tikzcd}
Y_{n} \arrow{d}[left]{F_n} \arrow{r}[above]{f} \arrow[dr, phantom, "\usebox\pullback", very near start] & Y_m \arrow{d}[right]{F_m}\\
X_{n} \arrow{r}[below]{f} & X_{m}
\end{tikzcd}
};
\end{tikzpicture}
\]

The Cartesian condition above is used below to require that a map $F : Y_\bullet\to X_\bullet$ between simplicial objects conserve the structures determined by the active or inert simplicial maps.

\begin{definition}
A map $F : Y_\bullet \to X_\bullet$ between simplicial spaces is {\em CULF} if $F$ is Cartesian with respect to the active maps in the category $\Delta$. Similarly, a map $F : Y_\bullet \to X_\bullet$ between simplicial spaces is {\em IKEO} if $F$ is Cartesian with respect to the inert maps in the category $\Delta$.
  \end{definition}

The word CULF is a composite acronym combining C and ULF, the two of which
stand for Conservative and Unique Lifting Factorizations, respectively. In some math papers ULF is a common acronym, see \cite[\S 5]{leinster2012}. On the other hand, the acronym IKEO combines IK and EO which are Inner Kan and Equivalence on Objects in long form. 
For an extended discussion of the CULF materials in the proposition below, see \cite[\S 4]{galvez2018}. Some discussion of IKEO recently appeared in \cite[\S 3.1]{GCKJMikeo}.


  

\begin{proposition}\label{culfikeoprop}
Let $F: Y_\bullet \rightarrow X_\bullet$ be a simplicial map. For each $n \geq 1$, let $f_n : [1]\to [n]$ be the active map determined by $f_n(0) := 0$ and $f_n(1) := n$ and set $(Xg)_n := (X(\pi_1), \ldots,X(\pi_n))$ and $(Yg)_n := (Y(\pi_1), \ldots,Y(\pi_n))$, where $\pi_i : [1] \to [n]$ is the inert map determined by $\pi_i(0) := i-1$ and $\pi_i(1) := i$. 
\begin{itemize}
\item The morphism $F$ is CULF if for each $n \in \Z_{\geq 1}$ the following diagram is Cartesian:
\[
\begin{tikzpicture}[baseline= (a).base]
\node[scale=1.0] (a) at (0,0){
\begin{tikzcd}
Y_{1} \arrow{d}[left]{F_1} \arrow[leftarrow]{r}[above]{Y(f_n)} & Y_n \arrow{d}[right]{F_n}\\
X_{1} \arrow[leftarrow]{r}[above]{X(f_n)} & X_n.
\end{tikzcd}
};
\end{tikzpicture}
\]

\item The morphisms $F$ is IKEO if $F_0:Y_0 \rightarrow X_0$ is a (weak) equivalence and for each $n\in\mathbb{Z}_{\geq 1}$ the following diagram is Cartesian:
\[
\begin{tikzpicture}[baseline= (a).base]
\node[scale=1.0] (a) at (0,0){
\begin{tikzcd}
Y_{n} \arrow{d}[left]{F_n} \arrow{r}[above]{(Yg)_n} & Y_1\times_{Y_0} Y_1\times_{Y_0} \cdots \times_{Y_0} Y_1 \arrow{d}[right]{F_1^{\times n}}\\
X_{n} \arrow{r}[above]{(Xg)_n} & X_1\times_{X_0} X_1\times_{X_0} \cdots \times_{X_0} X_1.
\end{tikzcd}
};
\end{tikzpicture}
\]
\end{itemize}
\end{proposition}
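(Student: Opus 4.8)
The plan is to reduce each of the two "if" statements to a statement about Cartesianness over \emph{all} active (resp.\ inert) maps by checking it on a generating set, and then closing under composition and pasting of Cartesian squares. The key structural fact I would use is the \emph{pasting law for pullbacks}: in a commuting diagram of two side-by-side squares, if the right square is Cartesian, then the left square is Cartesian iff the outer rectangle is. Since the classes of active and inert maps are each closed under composition and are generated by explicit morphisms, it suffices to verify the Cartesian condition on generators and then propagate.

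\medskip

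\textbf{The CULF case.} First I would recall that active maps are generated by the codegeneracies $s^i$ and the inner cofaces $d^i$, $0<i<n$. A cleaner generating observation, already packaged in \cite[\S 4]{galvez2018}, is that an active map $\alpha\colon[1]\to[n]$ is precisely $f_n$, and that every active map $[k]\to[n]$ factors through the $f_n$'s on each "edge"; more precisely, the square for a general active $\alpha\colon[k]\to[n]$ pastes together from the squares for the $f$'s associated to the subintervals cut out by $\alpha(0)<\alpha(1)<\dots<\alpha(k)$. So I would: (1) observe the hypothesis gives the Cartesian square for the specific active maps $f_n\colon[1]\to[n]$; (2) deduce from the pasting law that $F$ is Cartesian with respect to \emph{every} active map $[1]\to[n]$ — but that is just $f_n$ again, so nothing new — and more substantively, that $F$ is Cartesian with respect to any active $[k]\to[n]$ by writing $Y_n\to Y_k\cong Y_{[0,a_1]}\times\cdots$ as a composite/product of the edge maps; (3) conclude $F$ is CULF. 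The only care needed is to handle the degenerate active maps (identities, codegeneracies): for codegeneracies one uses that $F$ is a simplicial map together with the $n=1$ hypothesis, or one invokes that the relevant square for $s^i$ pastes out of the $f_n$ squares using the simplicial identities. I would cite \cite[\S 4]{galvez2018} for the precise bookkeeping rather than reproduce it.

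\medskip

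\textbf{The IKEO case.} Inert maps $\iota\colon[m]\to[k]$ are the distance-preserving inclusions, i.e.\ $\iota=\pi_{i+1,\dots,i+m}$ picks out a subinterval of length $m$. These are generated by the outer coface maps $d^0$ and $d^n$, together with — crucially for "equivalence on objects" — the identification of $Y_0$ with $X_0$. I would argue as follows: (1) the map $(Yg)_n\colon Y_n\to Y_1\times_{Y_0}\cdots\times_{Y_0}Y_1$ is assembled from the $n$ inert edge maps $\pi_i\colon[1]\to[n]$; the hypothesis that the big square with $F_1^{\times n}$ on the right (over $X_0$) is Cartesian, \emph{together with} $F_0$ being an equivalence (so that the fibre products over $Y_0$ and over $X_0$ match up under $F_1^{\times n}$), is exactly the statement that $F$ is Cartesian with respect to each $\pi_i$ after pasting; (2) more precisely, each individual square for $\pi_i\colon Y_n\to Y_1$ is obtained by composing the square for $(Yg)_n$ with a projection, and the projections $Y_1\times_{Y_0}\cdots\to Y_1$ are Cartesian because base change of the identity is an equivalence and $F_0$ is an equivalence; (3) a general inert $\iota\colon[m]\to[k]$ gives a square that pastes from the $\pi_i$ squares via the pasting law and the fact that $Y_m\to Y_1\times_{Y_0}\cdots\times_{Y_0}Y_1$ ($m$ factors) is itself built from edge maps; (4) conclude $F$ is IKEO, the hypothesis $F_0$ an equivalence covering precisely the "EO" (equivalence on objects) half. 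Here I would lean on \cite[\S 3.1]{GCKJMikeo} for the analogous reductions.

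\medskip

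\textbf{Where the difficulty lies.} The genuinely delicate point in both parts is the passage from "Cartesian on the length-one edges $f_n$ (resp.\ $\pi_i$)" to "Cartesian on \emph{all} active (resp.\ inert) maps," because this is not a formal consequence of closure under composition alone — one also needs that $Y_n$ maps to a fibre product of copies of $Y_1$ in a way compatible with $X_n$, and the pasting law only applies once that comparison is set up correctly. In the IKEO case the extra subtlety is that the fibre products are taken over $Y_0$ versus $X_0$, so the hypothesis "$F_0$ is an equivalence" is doing real work in making the right-hand vertical $F_1^{\times n}$ a map of the correct fibre products; without it the square would not even type-check as stated. I expect the cleanest write-up to isolate a small lemma: \emph{if $F$ is Cartesian with respect to all inert (resp.\ active) edge maps $[1]\to[n]$ and (in the inert case) $F_0$ is an equivalence, then $F$ is Cartesian with respect to all inert (resp.\ active) maps}, proved by induction on the number of generators in a factorization using the pasting law, and then the proposition is immediate. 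I would present this lemma and its short inductive proof, and otherwise defer to \cite[\S 4]{galvez2018} and \cite[\S 3.1]{GCKJMikeo}.
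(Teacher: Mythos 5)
The paper gives no proof of this proposition: it is stated as a recollection, with the CULF half deferred to \cite[\S 4]{galvez2018} and the IKEO half to \cite[\S 3.1]{GCKJMikeo}, so your sketch has to stand on its own. The CULF half of your argument is right in outline but one step as written is unjustified and also unnecessary. The clean argument is a single pasting: for any active $\alpha\colon[k]\to[n]$ with $k,n\geq 1$ one has $f_n=\alpha\circ f_k$, so the square for $f_n$ is the rectangle obtained by pasting the square for $\alpha$ (on $Y_n\to Y_k$) with the square for $f_k$ (on $Y_k\to Y_1$); the outer rectangle and the right-hand square are Cartesian by hypothesis, hence so is the square for $\alpha$. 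Your intermediate claim that $Y_k\cong Y_{[0,a_1]}\times\cdots$ is a Segal-type decomposition that does not hold for a general simplicial space and should be deleted. (Separately, the active codegeneracies $[k]\to[0]$ are not reached by the hypothesis as stated with $n\geq 1$; the version of this reduction in \cite{galvez2018} includes the case $n=0$.)

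The IKEO half contains a genuine gap. Your step (2) asserts that the projection squares $Y_1\times_{Y_0}\cdots\times_{Y_0}Y_1\to Y_1$ over $X_1\times_{X_0}\cdots\times_{X_0}X_1\to X_1$ are Cartesian because $F_0$ is an equivalence. They are not: already for $n=2$ the first-projection square is Cartesian if and only if the square on $Y_1\to Y_0$ over $X_1\to X_0$ is, which, given that $F_0$ is an equivalence, forces $F_1$ to be an equivalence. Concretely, let $F=N\phi\colon NG\to NH$ be the nerve of a group homomorphism. Then $F_0$ is an equivalence and every square in the displayed hypothesis is Cartesian (both horizontal Segal maps are isomorphisms), yet $F$ is Cartesian with respect to the inert map $\pi_1\colon[1]\to[2]$ only when $\phi$ is an isomorphism, since $H^2\times_H G\cong G\times H$ while $Y_2=G^2$. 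So the displayed hypotheses do not imply Cartesianness with respect to the individual inert edge maps, and no pasting argument can close this gap; the implication you are trying to prove is simply not available. The resolution is that ``ikeo'' in \cite{GCKJMikeo} is \emph{defined} by the displayed squares (equivalence on objects plus the relative inner-Kan condition), and it is those squares, not Cartesianness over inert maps, that are actually used in Proposition \ref{funprop} and Theorem \ref{FunSummary}; the second bullet should be read as recording that working definition rather than as a consequence of the inert-map condition, which is strictly stronger.
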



\subsection{Groupoids and incidence algebras}\label{incidencesec}
If $I$ is an $\infty$-groupoid, then there is a slice category, or overcategory, $\Sl{I}$ with objects given by maps of $\infty$-groupoids $X\to I$. 
A map $g : I\to J$ induces two maps between slice categories: a pushforward $g_! : \Sl{I}\to\Sl{J}$ and a pullback $g^* : \Sl{J} \to \Sl{I}$. The pushforward $g_{!} : \Sl{I}\to \Sl{J}$ is defined by $g_{!}(a) := a\circ g$ for $a : X\to I$ a groupoid map. The pullback is defined to send $a : X\to J$ to its pullback $g^*(a) : g^*(X)\to I$ over $I$: 
\begin{equation}\label{eq:pullback}
\begin{tikzpicture}[baseline= (a).base]
\node[scale=1.0] (a) at (0,0){
\begin{tikzcd}
g^*(X) \arrow{d}[left]{g^*(a)} \arrow{r}[above]{i} \arrow[dr, phantom, "\usebox\pullback", very near start] & X \arrow{d}[right]{a}\\
I \arrow{r}[above]{g} & J.
\end{tikzcd}
};
\end{tikzpicture}
\end{equation}
In this way, any diagram of $\infty$-categories of the form
\[
\begin{tikzcd}
I \arrow[r, leftarrow, "f"] & M & \arrow[l, leftarrow, swap, "g"] J
\end{tikzcd}
\]
induces maps between associated slice categories,
$$g_{!}\circ f^* : \Sl{I} \to \Sl{J}\quad\quad\textnormal{ and }\quad\quad f_!\circ g^* : \Sl{J} \to \Sl{I}.$$
These maps are used in the definition below to introduce the structural maps of incidence (co)algebras. In addition, we are implicitly using the natural isomorphisms $\Sl{I\times J}\cong \Sl{I}\times \Sl{J}$. 


\begin{definition}\label{incidencealgdef}
\begin{enumerate}
\item If $X_\bullet : \Delta^{\op} \rightarrow \spaces$ is a $2$-Segal space, then the {\em incidence coalgebra} $(\Sl{X_1},\Delta, \epsilon)$ is determined by the spans
\[
\begin{tikzcd}
X_1 & X_2 \arrow{l}[above]{\partial_1} \arrow{r}[above]{(\partial_2,\partial_0)} & X_1\times_{X_0} X_1
\end{tikzcd}
\quad\textnormal{ and }\quad
\begin{tikzcd}
X_1 & X_0 \arrow{l}[above]{s_0} \arrow{r}[above]{\can} & \pt,
\end{tikzcd}
\]
where $\can : X_0\to \pt$ is the canonical map to the one point space $\pt$. In more detail, $\Delta : \Sl{X_1}\to \Sl{X_1\times X_1}$ is given by $\Delta(a) := (\partial_2, \partial_0)_{!}\circ \partial_1^*(a)$ and $\epsilon : \Sl{X_1}\to \cS$ is defined to be $\epsilon(a) := \can_! \circ s_0^*(a)$.

\item If $X_\bullet : \Delta^{\op} \rightarrow \spaces$ is a $2$-Segal space, then the {\em incidence algebra} $(\Sl{X_1}, m, \iota)$ is determined by the adjoint spans: $m(a) := (\partial_1)_!\circ (\partial_2,\partial_0)^*(a)$ and $\iota(a) := (s_0)_! \circ \can^*$. 
\end{enumerate}
  \end{definition}

These definitions can be compared to those found in Section \ref{sec:algSpan}. In more detail, the span which determines the coproduct $\Delta$ and product $m$ above is identical to equation \eqref{eq:multSpan} and the span which determines the counit $\epsilon$ and unit $\iota$ is identical to equation \eqref{eq:unitSpan}. 
The axioms of $2$-Segal spaces allow one to prove that the incidence coproduct $\Delta$ and incidence product $m$ are coassociative and associative, respectively; the arguments involved are the same as arguments found in Section \ref{sec:algSpan}.

Recall that if $F : Y_\bullet \to X_\bullet$ is a map between simplicial objects in $\infty$-groupoids, then there is a map $F([1]) : Y_1 \to X_1$. The proposition below shows that the CULF and IKEO conditions on $F$ translate to homomorphism conditions on the pushforward map $F_! := F([1])_! : \Sl{Y_1} \to \Sl{X_1}$ between slice categories.

\begin{proposition}\label{funprop}
\begin{enumerate}
\item If $F : Y_\bullet \to X_\bullet$ is a CULF map between $2$-Segal spaces, then $F_! : \Sl{Y_1}\to \Sl{X_1}$ is a coalgebra homomorphism.
\item If $F : Y_\bullet \to X_\bullet$ is an IKEO map between $2$-Segal spaces, then $F_! : \Sl{Y_1}\to \Sl{X_1}$ is an algebra homomorphism.
\end{enumerate}
  \end{proposition}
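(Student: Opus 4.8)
The plan is to verify directly that the pushforward $F_! := F([1])_!$ intertwines the structure maps of the incidence (co)algebras of Definition~\ref{incidencealgdef}. Only two formal tools are needed: the Beck--Chevalley (base change) isomorphism $f^* \circ g_! \simeq g'_! \circ f'^*$ attached to a pullback square with bottom edge $f$, right edge $g$, top edge $f'$ and left edge $g'$; and the naturality squares expressing that $F$ is a morphism of simplicial objects. The $2$-Segal hypotheses on $X_\bullet$ and $Y_\bullet$ enter only through Definition~\ref{incidencealgdef} and the remarks following it, which guarantee that $\Sl{X_1}$ and $\Sl{Y_1}$ are genuine (co)algebras; that $F_!$ respects the structure maps is a soft consequence of the CULF, resp.\ IKEO, hypothesis.

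The conceptual heart is a dictionary between the two defining spans and the active--inert factorisation on $\Delta$. In the span $X_1 \xleftarrow{\partial_1} X_2 \xrightarrow{(\partial_2,\partial_0)} X_1 \times_{X_0} X_1$, the leg $\partial_1 = X(d^1)$ comes from the inner coface $d^1\colon[1]\to[2]$, hence from an \emph{active} map, while $(\partial_2,\partial_0)$ is assembled from the outer cofaces $d^2,d^0\colon[1]\to[2]$, which are \emph{inert}; in the span $X_1 \xleftarrow{s_0} X_0 \xrightarrow{\can} \pt$ the leg $s_0 = X(s^0)$ comes from the \emph{active} codegeneracy $s^0\colon[1]\to[0]$. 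Now $\Delta_X = (\partial_2^X,\partial_0^X)_! \circ (\partial_1^X)^*$ and $\epsilon_X = \can_! \circ (s_0^X)^*$ \emph{pull back} along the active legs $\partial_1$ and $s_0$, so the CULF hypothesis supplies exactly the Cartesian squares
\[
Y_2 = X_2 \times_{X_1} Y_1, \qquad Y_0 = X_0 \times_{X_1} Y_1
\]
needed to apply base change, giving $(\partial_1^X)^* \circ (F_1)_! \simeq (F_2)_! \circ (\partial_1^Y)^*$ and $(s_0^X)^* \circ (F_1)_! \simeq (F_0)_! \circ (s_0^Y)^*$. The remaining \emph{push-forward} legs $(\partial_2,\partial_0)$ and $\can$ are absorbed via the naturality identities $(\partial_2^X,\partial_0^X)\circ F_2 = (F_1\times_{F_0}F_1)\circ(\partial_2^Y,\partial_0^Y)$ and $\can_{X_0}\circ F_0 = \can_{Y_0}$. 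Composing these, and invoking $\Sl{I\times J}\cong\Sl{I}\times\Sl{J}$ together with $\Sl{-}$ applied to the forgetful maps $Y_1\times_{Y_0}Y_1\to Y_1\times Y_1$ and $X_1\times_{X_0}X_1\to X_1\times X_1$, yields $\Delta_X\circ F_! = (F_!\times F_!)\circ\Delta_Y$ and $\epsilon_X\circ F_! = \epsilon_Y$, i.e.\ $F_!$ is a coalgebra map; no hypothesis on $F_0$ is used, consistently with part~(1) assuming only CULF.

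For part~(2) the roles of active and inert are exchanged. Here $m_X = (\partial_1^X)_! \circ (\partial_2^X,\partial_0^X)^*$ and $\iota_X = (s_0^X)_! \circ \can^*$ \emph{pull back} along the inert leg $(\partial_2,\partial_0)$ and along $\can$. By Proposition~\ref{culfikeoprop}, the IKEO hypothesis gives precisely that the square over $(\partial_2^X,\partial_0^X)\colon X_2\to X_1\times_{X_0}X_1$, with verticals $F_2$ and $F_1\times_{F_0}F_1$, is Cartesian, so base change yields $(\partial_2^X,\partial_0^X)^* \circ (F_1\times_{F_0}F_1)_! \simeq (F_2)_! \circ (\partial_2^Y,\partial_0^Y)^*$, and the push-forward along $\partial_1$ is handled by naturality, $\partial_1^X\circ F_2 = F_1\circ\partial_1^Y$. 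After identifying $(F_1\times_{F_0}F_1)_!$ with $F_!\times F_!$ --- here one uses that $F_0$ is a weak equivalence to match the fibre products over $Y_0$ and $X_0$ --- this gives $m_X\circ(F_!\times F_!) = F_!\circ m_Y$. For the unit, naturality gives $F_1\circ s_0^Y = s_0^X\circ F_0$ and $\can_{Y_0} = \can_{X_0}\circ F_0$, hence
\[
F_!\circ\iota_Y = (s_0^X)_! \circ (F_0)_! \circ F_0^* \circ \can_{X_0}^* = (s_0^X)_! \circ \can_{X_0}^* = \iota_X,
\]
where the middle step is exactly where the IKEO requirement that $F_0$ be an equivalence enters, via $(F_0)_!\circ F_0^*\simeq\id$.

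I expect the genuine difficulty to be organisational rather than computational: one must correctly decide, for each leg of the two defining spans, whether it is an active or an inert simplicial operator --- this is what dictates whether CULF or IKEO is the relevant hypothesis --- and then keep the directions of the Beck--Chevalley isomorphisms straight throughout. A secondary nuisance is the careful treatment of the identifications $\Sl{I\times J}\cong\Sl{I}\times\Sl{J}$ and of the fibre-product slices $\Sl{Y_1\times_{Y_0}Y_1}$ when passing between $(F_1\times_{F_0}F_1)_!$ and $F_!\times F_!$; in the CULF case this is harmless, whereas in the IKEO case it is precisely the point at which ``$F_0$ is a weak equivalence'' is needed.
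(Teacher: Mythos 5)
Your proof is correct and follows essentially the same route as the paper: Beck--Chevalley applied to the Cartesian squares supplied by the CULF (resp.\ IKEO) hypothesis on the active leg $\partial_1$, $s_0$ (resp.\ inert leg $(\partial_2,\partial_0)$) of the defining spans, combined with functoriality of $(-)_!$ along the commuting squares on the remaining leg. The only difference is that the paper verifies compatibility with all $n$-fold (co)products at once via the active maps $f_n:[1]\to[n]$, whereas you treat the binary case $n=2$ together with the (co)unit case $n=0$, which suffices and is equivalent in substance.
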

\begin{proof}
For part (1), we follow \cite[Lemma 8.2]{galvez2018}.  First, suppose that 
$F : Y_\bullet \to X_\bullet$ is a CULF map. By virtue of the definitions above there is a commutative diagram
\begin{equation}
\label{eq:funcDiag}
\begin{tikzcd}
Y_{1} \arrow{d}[left]{F_1} \arrow[leftarrow]{r}[above]{Y(f_n)} & Y_n \arrow{d}[right]{F_n} \arrow[rightarrow]{r}{f} & Y_1^{\times n} \arrow{d}[right]{F_1^{\times n}}\\
X_{1} \arrow[leftarrow]{r}[above]{X(f_n)} & X_n \arrow[rightarrow]{r}{f'} & X_1^{\times n}.
\end{tikzcd}
\end{equation}
When $n=0$ the left-hand square is Cartesian because $F$ is assumed to be conservative and when $n \geq 1$ the left-hand square is Cartesian because $F$ is ULF; see Proposition \ref{culfikeoprop} above.

Applying Beck--Chevalley to the diagram \eqref{eq:funcDiag} gives the relation
\begin{equation}\label{lhbeckeq1}
  (F_n)_! \circ Y(f_n)^* \cong X(f_n)^* \circ (F_1)_!.
\end{equation}
Since the $n$-fold coproduct $\Delta_X^{(n)}$ is given by $f'_! \circ X(f_n)$, composing the right-hand side with $f'_!$  gives
\begin{equation}\label{lhbeckeq2}
f'_! \circ X(f_n)^*\circ (F_1)_! \cong \Delta_X^{(n)} \circ (F_1)_!.
\end{equation}
On the other hand, composing the left-hand side with $f'_!$ 
and using $f'_!\circ (F_n)_! \cong (f'\circ F_n)_!$ gives 
\begin{equation}\label{lhbeckeq3}
  (F_1^n\circ f)_!\circ Y(f_n)^* \cong (F^n_1)_!\circ(f_! \circ Y(f_n)^*) \cong (F^n_1)_! \circ \Delta_Y^{(n)},
\end{equation}
where $\Delta_Y^{(n)}$ is the $n$-fold composition of the coproduct from Definition \ref{incidencealgdef}.
Combining equation \eqref{lhbeckeq2} with equations \eqref{lhbeckeq1} and \eqref{lhbeckeq3} implies
$$\Delta_X^{(n)} \circ (F_1)_! \cong (F^n_1)_! \circ \Delta_Y^{(n)}.$$
Hence, $(F_1)_!$ is a homomorphism of incidence coalgebras.

For part (2), there is an analogous argument. From Proposition \ref{culfikeoprop}, the IKEO condition on $F$ implies that the right-hand square of the diagram \eqref{eq:funcDiag} is Cartesian. Applying Beck--Chevalley to this square gives the relation
\begin{equation}\label{rhbeckeq1}
  (F_n)_! \circ f^* \cong f'^* \circ (F_1^{\times n})_!.
\end{equation}
Composing the right-hand side with $X(f_n)_!$ gives 
$$X(f_n)_! \circ \left( f'^* \circ (F_1^{\times n})_!\right) \cong \left( X(f_n)_! \circ f'^*\right) \circ F_1^{\times n})_! \cong  m_X^{(n)} \circ (F_1^{\times n})_!,$$
 where $m_X^{(n)}$ is the $n$-fold composition of the incidence product $m$ in Definition \ref{incidencealgdef}. On the other hand, the left-hand side is
\begin{align*}
X(f_n)_! \circ \left((F_n)_! \circ f^*\right) &\cong \left( X(f_n)\circ F_n \right)_! \circ  f^*\\
&\cong \left( F_1\circ Y(f_n)\right)_! \circ  f^*\\
&\cong (F_1)_! \circ \left( Y(f_n)_! \circ f^*\right)\\
&\cong (F_1)_! \circ m_Y^{(n)}. \qedhere
\end{align*}
\end{proof}

\subsection{Linearizing groupoids}\label{sec:groupoidtolinearalg}

Materials from Section \ref{sec:finCat} admit an extension in which
groupoids are replaced by $\infty$-groupoids. 
A similar argument to Theorem \ref{thm:linearize} gives the derived Hall algebra or numerical incidence algebra in Proposition \ref{prop:theyagree}, see also \cite[\S 2]{toen2006} or \cite[\S 7.8]{galvez2018b}. So a $2$-Segal space $X$ allows one to construct both an incidence algebra and a numerical incidence algebra, Proposition \ref{prop:transition} shows that when suitable finiteness conditions are satisfied there is a functorial transition from the setting of incidence algebras to numerical incidence algebras.


\newcommand{\pro}[1]{\underleftarrow{#1}}
\newcommand{\ind}[1]{\underrightarrow{#1}}
\providecommand{\kat}[1]{\text{{\textsl{#1}}}}
\newcommand{\linn}{\kat{lin}}
\newcommand{\inlin}{\ind\linn}

Call an $\infty$-groupoid $B$ \emph{locally finite} when the groups $\pi_i(B,b)$ are finite for all $b\in B$ and $i\in \mathbb{Z}_{\geq 1}$ and there is an $n\in \mathbb{Z}_{\geq 1}$ such that $\pi_i(B,b)=0$ for all $i>n$.
The \emph{homotopy cardinality} $\|B\|\in \mathbb{Z}$ of a locally finite space $B$ is given by,
$$\|B\| := \sum_{b\in\pi_0(B)} \prod_{i=1}^\infty \vert \pi_i(B,b)\vert^{(-1)^i}.$$
This will allow us to extend the definition of pushforward $f_*$ from Section \ref{sec:finCat}.

To an $\infty$-groupoid $B$ we associate the vector space $\lin(B):= \Fun_c(\pi_0(B), \C)$ of finitely supported functions.
If $f : X\to Y$ is a map between locally finite groupoids and $f$ has finite fibers, $\vert f^{-1}(y)\vert<\infty$ for all $y\in Y$, then there is a pullback map $f^* :  \lin(Y) \to \lin(X)$ given by $f^*(\varphi)(x) := \varphi(f(x))$. 
If $f : X\to Y$ then there is a pushforward map $f_* : \lin(X)\to \lin(Y)$ given by 
$$f_*(\varphi)(y) := \sum_{z\in\pi_0(F_y)} \varphi(i(z))\prod_{i>0} \vert \pi_i(F_y,z)\vert^{(-1)^i}$$
where the map $i : \pi_0(F_y) \to \pi_0(X)$ comes from the inclusion of the homotopy fiber $F_y := \{y\} \times^h_Y X$. Note that any map between locally finite spaces must have locally finite fibers $F_y$. This assignment recovers homotopy cardinality when $\pi : B\to \pt$, by $\pi_*(1_B)=\|B\|$ where $1_B := \sum_{b\in\pi_0(B)} \delta_b$. The pullback $f^*$ and pushforward $f_*$ assignments determine contravariant and covariant functors respectively which satisfy the Beck--Chevalley property.

Denote by $\spaces^{lf}\subset \spaces$ the full subcategory of locally finite spaces and $\Span(\spaces^{lf})^f \subset \Span(\spaces^{lf})$ the subcategory of spans,
$$S \stackrel p\leftarrow M \stackrel q\to T,$$
for which the map $p$ has finite fibers. The assignments in the previous paragraph determine the functor $\lin$ introduced by the proposition below. The proof is by analogy with materials in Section \ref{sec:finCat}, see also \cite[\S 2]{toen2006}.

\begin{proposition}\label{prop:theyagree}
There is a monoidal functor $\lin : \Span(\spaces^{lf})^f \to \Vect_{\C}$ from the category of finite spans of locally finite $\infty$-groupoids to the category of complex vector spaces which is determined by the assignments
$B \mapsto \lin(B)$ and 
$$(S \stackrel p\leftarrow M \stackrel q\to T) \mapsto (q_*p^* : \lin(S) \to \lin(T)).$$
\end{proposition}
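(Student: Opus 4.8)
The plan is to verify the three things that make $\lin$ a monoidal functor: (i) it is well-defined on equivalence classes of spans, (ii) it respects composition of spans and sends identity spans to identity maps, and (iii) it is compatible with the monoidal structures. Well-definedness is immediate: an equivalence $M \xrightarrow{\sim} M'$ of spans induces an isomorphism on $\pi_0$ and on all higher homotopy groups of fibres, so $p'^* = $ (pullback along the equivalence)$\circ p^*$ and similarly for the pushforwards, and the formulas for $f^*$ and $f_*$ manifestly depend only on these homotopy-theoretic data; thus $q'_* p'^* = q_* p^*$. One must also check that the category $\Span(\spaces^{lf})^f$ is actually well-defined, i.e. that the class of spans whose left leg has finite fibres is closed under composition of spans: a composite of two such spans is formed by a homotopy pullback, and the left leg of the composite is a base change of the left leg of the first span, hence still has finite fibres since finiteness of fibres is preserved under pullback (the fibre of the base-changed map over a point is the fibre of the original map over the image point).

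Next I would establish functoriality. That $\lin$ sends the identity span $S \xleftarrow{\id} S \xrightarrow{\id} S$ to $\id_{\lin(S)}$ is clear since $\id^* = \id$ and $\id_* = \id$ (the homotopy fibre of an identity map over a point is contractible). For composition, given spans $S \xleftarrow{p} M \xrightarrow{q} T$ and $T \xleftarrow{r} N \xrightarrow{s} U$, the composite span has apex the homotopy pullback $M \times^h_T N$ with projections $\bar r : M\times^h_T N \to M$ and $\bar q : M\times^h_T N\to N$, and legs $p\bar r$ and $s\bar q$. We must show $(s\bar q)_* (p\bar r)^* = (s_* r^*)\circ(q_* p^*)$. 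Expanding the left side via functoriality of $(-)^*$ and $(-)_*$ (which are genuine functors on the appropriate subcategories, as asserted in the paragraph preceding the proposition), this reads $s_* \bar q_* \bar r^* p^*$, so it suffices to prove the Beck--Chevalley identity $\bar q_* \bar r^* = r^* q_*$ for the homotopy-pullback square with sides $q, r, \bar q, \bar r$. This is exactly the Beck--Chevalley property referenced in the same paragraph; one checks the fibre-by-fibre cardinality identity, using that the homotopy fibre of $\bar q$ over a point $n\in N$ is equivalent to the homotopy fibre of $q$ over $r(n)\in T$, so the weighted sums defining the two sides agree.

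Finally, for monoidality, the monoidal structure on $\Span(\spaces^{lf})^f$ is Cartesian product (a product of two finite-fibre spans is again finite-fibre, since $\pi_i$ and $\pi_0$ of a product split as products, so fibre cardinalities multiply and remain finite), and on $\Vect_{\C}$ it is tensor product. The required coherent natural isomorphism is $\lin(B)\otimes\lin(B') \xrightarrow{\sim} \lin(B\times B')$, sending $\delta_b\otimes\delta_{b'}\mapsto \delta_{(b,b')}$; this is an isomorphism of vector spaces because $\pi_0(B\times B')=\pi_0(B)\times\pi_0(B')$ and finitely supported functions on a product of sets is the tensor product of the finitely supported function spaces. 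Compatibility with the structure morphisms amounts to checking that $(f\times g)^* = f^*\otimes g^*$ and $(f\times g)_* = f_*\otimes g_*$ under this identification, which follows because homotopy fibres of a product map are products of homotopy fibres and homotopy cardinality is multiplicative, $\|F\times F'\| = \|F\|\cdot\|F'\|$; the associativity and unit coherence pentagons/triangles then commute because they already do at the level of the underlying sets $\pi_0$.

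The main obstacle is the Beck--Chevalley identity in step two, since it is the one nontrivial computation: one must carefully identify the homotopy fibre of the base-changed map $\bar q$ over $n\in N$ with the homotopy fibre of $q$ over $r(n)$ (this is where the homotopy pullback, rather than strict pullback, is essential, and why local finiteness of the relevant fibres must be tracked), and then verify that the two weighted sums $\sum_{z} \varphi(\cdot)\prod_i |\pi_i(F,z)|^{(-1)^i}$ computed along the two sides of the square genuinely coincide. Everything else is bookkeeping that parallels the groupoid case treated in Section~\ref{sec:finCat} and in \cite[\S 2]{toen2006}.
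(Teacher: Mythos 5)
Your proposal is correct and spells out exactly the verification the paper leaves implicit: the paper gives no argument beyond citing the analogy with Section~\ref{sec:finCat} and \cite[\S 2]{toen2006}, and your reduction of functoriality to the Beck--Chevalley identity for homotopy pullback squares (via the identification of the homotopy fibre of $\bar q$ over $n$ with that of $q$ over $r(n)$), together with multiplicativity of homotopy cardinality for the monoidal structure, is that analogy carried out. One small slip: the left leg of a composite span is not a base change of the left leg of the first span but rather the composite $p\circ\bar r$ of that leg with a base change $\bar r$ of the left leg of the \emph{second} span, so closure of the finite-fibre condition under span composition also needs the (easy) fact that finite-fibre maps between locally finite spaces are closed under composition, not only under pullback.
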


By virtue of this proposition, a span $S \stackrel p\leftarrow M \stackrel q\to T$, which determines the map $p^*q_! : \FSl{S} \to \FSl{T}$ from Section \ref{incidencealgdef}, we can assign the map
\[
\lin(S \stackrel p\leftarrow M \stackrel q\to T) := p^*q_* : \lin(S)\to \lin(T).
\]
The next proposition constructs a map $\plin$ from the category of finite slice categories $\FSl{S}$ and spans to $\Vect_{\C}$. If $\FSl{I}\subset \Sl{I}$ is a finite slice category and $a : X\to I$, then set $\plin(a) := a_*(1_X)\in \lin(I)$ where $1_X = \sum_{i\in\pi_0(I)} \delta_i$. Recall from Section \ref{incidencesec} that if $g : I \to J$ is a map of $\infty$-groupoids, then there are maps $g^* : \FSl{J}\to\FSl{I}$ and $g_! : \FSl{I}\to \FSl{J}$. We set $\plin(g^*) := g^* : \lin(J)\to \lin(I)$ and $\plin(g_!) := g_* : \lin(I) \to \lin(J)$.

\begin{proposition}\label{prop:transition}
The assignments above respect maps in the sense that the following properties hold:
\begin{enumerate}
\item If $g : I \to J$ is an $\infty$-groupoid map and $a : X\to I$, then $$\plin(g_!)(a_*(1_X)) = g_*(\plin(a)).$$
\item If $g : I \to J$ is an $\infty$-groupoid map and $a : X\to J$, then $$\plin(g^*(a)) = g^*(\plin(a)).$$
  \end{enumerate}
\end{proposition}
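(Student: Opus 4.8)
The plan is to verify each identity directly from the definitions of $\plin$, $\lin$, and the pushforward/pullback operations, reducing both claims to already-established functoriality of $f \mapsto f_*$ and the base-change (Beck--Chevalley) property recorded in Proposition \ref{prop:theyagree}.

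For part (1), unwind the left-hand side: by definition $\plin(g_!)(a_*(1_X)) = g_*(a_*(1_X))$, since $\plin(g_!)$ was \emph{defined} to be the pushforward $g_*$. Now recall that $\plin(a) = a_*(1_X)$ by definition, so the claimed identity $\plin(g_!)(a_*(1_X)) = g_*(\plin(a))$ is literally $g_*(a_*(1_X)) = g_*(a_*(1_X))$ once one expands $\plin(a)$. In other words, the content of part (1) is just the compatibility $(g_!(a))_* = g_* \circ a_*$ under the identification $g_!(a) = a \circ g$ (the pushforward of slice objects is composition), together with functoriality of $(-)_*$, namely $(g \circ a)_* \cong g_* \circ a_*$, which is part of the statement that $(-)_*$ is a covariant functor on locally finite groupoids (as asserted in the paragraph preceding Proposition \ref{prop:theyagree}). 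One should check the bookkeeping that $1_{g^*(\cdot)}$ or rather $1_{X}$ is transported correctly, but no homotopy fiber computations beyond those packaged in Proposition \ref{prop:theyagree} are needed.

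For part (2), the plan is to use base change. By construction $\plin(g^*(a)) = (g^*(a))_*(1_{g^*(X)})$, where $g^*(X) \to X$ is the pullback of $a : X \to J$ along $g : I \to J$ as in diagram \eqref{eq:pullback}; call the top horizontal map $i : g^*(X) \to X$. One has $1_{g^*(X)} = i^*(1_X)$ since $i^*$ sends the unit function (sum of delta functions) to the unit function (the pullback of a constant along any map is constant). Then the Beck--Chevalley property applied to the Cartesian square \eqref{eq:pullback}, with $s = g$ in the smooth/proper roles arranged appropriately, gives $(g^*(a))_* \circ i^* = g^* \circ a_*$ as maps $\lin(X) \to \lin(I)$. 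Applying both sides to $1_X$ yields $(g^*(a))_*(i^*(1_X)) = g^*(a_*(1_X))$, i.e. $\plin(g^*(a)) = g^*(\plin(a))$, which is the claim.

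The main obstacle — such as it is — is purely one of making the Beck--Chevalley application legitimate: one must confirm that the relevant maps lie in the allowed classes (here, that the maps have finite fibers or locally finite fibers so that $(-)_*$ is defined, which holds automatically for maps between locally finite spaces by the remark in the text), and that the identification $1_{g^*(X)} = i^*(1_X)$ is compatible with the homotopy-cardinality weights appearing in the definition of $(-)_*$. Both are routine given Proposition \ref{prop:theyagree}, so the proof is essentially a matter of carefully chasing the definitions rather than proving anything substantively new; the argument runs in parallel to the groupoid-level computations of Section \ref{sec:finCat} and the incidence-algebra manipulations in the proof of Proposition \ref{funprop}.
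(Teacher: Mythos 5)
Your proposal is correct and takes essentially the same route as the paper's proof: part (1) reduces to the identification $g_!(a)=g\circ a$ together with functoriality of $(-)_*$, and part (2) is exactly the Beck--Chevalley identity for the Cartesian square \eqref{eq:pullback} combined with the observation that $i^*(1_X)=1_{g^*(X)}$. The only difference is cosmetic (you run the part (2) computation from left to right rather than right to left).
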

\begin{proof}
For (1): Recall that $g_!(a) = g\circ a : X\to I\to J$. We have made the assignment $\plin(g_! : \FSl{I} \to \FSl{J}) = g_* : \lin(I) \to \lin(J)$, so the map $\plin(g_!) : \lin(I) \to \lin(J)$ takes the vector $a_*(1_X)\in \lin(I)$ to $\plin(g_!)(a_*(1_X)) \in \lin(J)$. Now observe that $\plin(g_!)(a_*(1_X)) = g_*(a_*(1_X)) = (ga)_*(1_X) = \plin(g_*(a)) \in \lin(J)$.

For (2): Recall that $g^*(a) : g^*(X) \to I$ is the pullback in the Cartesian diagram \eqref{eq:pullback}. We have $\plin(g^* : \FSl{J} \to \FSl{I}) = g^* : \lin(J)\to \lin(I)$, so $\plin(g^*)$ takes the vector $a_*(1_X) \in \lin(J)$ to $\plin(g^*)(a_*(1_X)) = g^*(a_*(1_X)) = (g^*a_*)(1_X) \stackrel{(*)}{=} (g^*(a)_* i^*)(1_X) = g^*(a)_*(1_{g^*(X)})$, where the map $i : g^*(X) \to J$ is the inclusion $i : g^*(X) \to X$ and the Beck--Chevalley property is used to justify $(*)$.
  \end{proof}

For other details see \cite{galvez2018b, GCKJMhtylinearalg}. 

In this way the construction of the numerical Hall algebra factors through incidence algebras when the span defining the product satisfies sufficient finiteness conditions.




\subsection{From incidence algebras to Hall algebras}
\renewcommand{\AA}{\mathcal{A}}
\renewcommand{\BB}{\mathcal{B}}





Since the Waldhausen construction $\Wald_{\bullet}(\CC)$ is $2$-Segal (Theorem \ref{thm:Wald2Seg}), there is an incidence coalgebra and an incidence algebra,
$$(\Sl{\Wald_1(\CC)}, \Delta, \epsilon)\quad\quad\textnormal{ and }\quad\quad (\Sl{\Wald_1(\CC)}, m, \iota).$$
The Hall (co)algebra introduced in Section \ref{sec:finCat} can be obtained from the incidence (co)algebra by the groupoid linearization procedure of Section \ref{sec:groupoidtolinearalg} above. This allows us to translate Proposition \ref{funprop} into a statement about functoriality of the Hall (co)algebra. An exact functor $F : \AA\to \BB$ induces a map $F : S_\bullet(\AA)\to S_\bullet(\BB)$ between Waldhausen constructions. We analyze what the CULF and IKEO conditions on $F$ entail. This is the subject of the theorem below.

\begin{theorem}\label{FunSummary}
  Suppose that $F : \AA\to \BB$ is an exact finitary functor between finitary abelian categories. The corresponding map $F_\bullet : \Wald_\bullet(\AA)\to \Wald_\bullet(\BB)$ between $2$-Segal spaces induces homomorphisms between corresponding Hall coalgebras or Hall algebras when $F$ is CULF or IKEO, respectively. These conditions correspond to
  \begin{itemize}
  \item CULF: If $F(Y)\in Ob(\BB)$ is in the image of $F$, then so must be all of the subobjects and quotients of $F(Y)$. That is, for all $X' \subset F(Y)$, there exists $X$ in $Ob(\AA)$ such that $F(X) = X'$ and there exists $Y/X$ such that $F(Y/X) = F(Y)/X'$.
    \item IKEO: If $F(Z)$ and $F(X)$  are in the image of $F$ and $Y'$ is an extension of $F(Z)$ by $F(X)$ in $\BB$, then there is a unique extension $Y$ of $Z$ by $X$ in $\AA$ such that $F(Y) = Y'$.
    \end{itemize}
  \end{theorem}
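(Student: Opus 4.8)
The plan is to deduce Theorem \ref{FunSummary} from Proposition \ref{funprop} by unwinding, in the concrete setting of Waldhausen constructions of finitary abelian categories, exactly what the CULF and IKEO conditions on $F_\bullet : \Wald_\bullet(\AA) \to \Wald_\bullet(\BB)$ say. By Proposition \ref{funprop}, CULF implies $F_!$ is a coalgebra homomorphism and IKEO implies $F_!$ is an algebra homomorphism; after passing through the groupoid-linearization of Section \ref{sec:groupoidtolinearalg} (using that $F$ is finitary so the finiteness hypotheses of Proposition \ref{prop:theyagree} and Proposition \ref{prop:transition} are met), these become the asserted homomorphisms of Hall (co)algebras. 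So the substance of the theorem is the translation of ``Cartesian with respect to active maps'' and ``Cartesian with respect to inert maps'' into the stated object-level conditions on $F$.

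First I would treat IKEO. By Proposition \ref{culfikeoprop}, IKEO means $F_0$ is an equivalence (automatic, since $\Wald_0 = \{0\}$ for both categories) together with: for each $n \geq 1$ the square comparing $\Wald_n$ with the iterated fibre product $\Wald_1 \times_{\Wald_0} \cdots \times_{\Wald_0} \Wald_1$ over $F_1^{\times n}$ is Cartesian. Since $\Wald_0$ is trivial, this fibre product is just $\Wald_1^{\times n}$, and the map $(Xg)_n$ records the $n$ consecutive subquotients of a flag. I would check it suffices to verify the case $n = 2$ (the general Cartesian square factors as a composite/pullback of the $n=2$ ones, using that $\Wald_\bullet$ is $2$-Segal so $\Wald_n \simeq \Wald_2 \times_{\Wald_1} \cdots \times_{\Wald_1}\Wald_2$; alternatively cite the standard reduction in \cite{galvez2018}). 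For $n = 2$, $\Wald_2(\CC) \simeq \{U \rightarrowtail V \twoheadrightarrow W\}$ and the map to $\Wald_1 \times \Wald_1$ is $(\text{conflation}) \mapsto (U, W)$. Chasing the Cartesian square: a point of the pullback is a conflation $X' \rightarrowtail Y' \twoheadrightarrow Z'$ in $\BB$ together with objects $X, Z \in \AA$ and isomorphisms $F(X) \cong X'$, $F(Z) \cong Z'$; the square is Cartesian precisely when such data lifts uniquely (up to contractible choice) to a conflation $X \rightarrowtail Y \twoheadrightarrow Z$ in $\AA$ with $F$ of it isomorphic to the given one — which is exactly the stated condition that extensions of $F(Z)$ by $F(X)$ in $\BB$ pull back bijectively to extensions of $Z$ by $X$ in $\AA$. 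Unwinding groupoid-level uniqueness (no automorphisms beyond those coming from $\AA$, bijectivity on $\pi_0$) gives the ``unique extension'' phrasing; I would be slightly careful here that ``Cartesian'' in the model category of groupoids means an equivalence of groupoids on fibres, hence both fully faithful (automorphism condition) and essentially surjective (existence) and faithful-reflecting (uniqueness on isomorphism classes).

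For CULF, by Proposition \ref{culfikeoprop} the condition is that for each $n \geq 1$ the square comparing $\Wald_1$ and $\Wald_n$ along the long active edge $f_n : [1] \to [n]$ is Cartesian; by the $2$-Segal decomposition it again reduces to $n = 2$, i.e. the square with $\Wald_2 \to \Wald_1$ given by $\partial_1$ (the map $(U \rightarrowtail V \twoheadrightarrow W) \mapsto V$). Chasing this Cartesian square: a point of the pullback is an object $Y \in \AA$ together with a conflation $X' \rightarrowtail F(Y) \twoheadrightarrow Z'$ in $\BB$; Cartesianness says this is the same as a conflation $X \rightarrowtail Y \twoheadrightarrow Z$ in $\AA$ (uniquely up to contractible choice) with $F$ of it the given one. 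In other words: every subobject $X' \subset F(Y)$ (with quotient $Z' = F(Y)/X'$) is $F(X)$ for some $X \subset Y$ with $F(Y/X) = Z'$, and this lift is unique — exactly the stated CULF condition (the uniqueness clause is what makes it ``unique lifting of factorizations'' rather than just ``surjective on subobjects''). Then apply Proposition \ref{funprop}(1) and linearize.

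The main obstacle I anticipate is not any single calculation but the careful bookkeeping at the groupoid level: ``Cartesian square of groupoids'' carries information in all of $\pi_0$, $\pi_1$, and faithfulness, and one must check that the automorphism-group part of the equivalence is automatically satisfied (it is, because an exact functor of abelian categories is already faithful on Hom's in the relevant range and the conflation groupoids have automorphism groups built from those of $X, Y, Z$), so that the content genuinely reduces to the stated $\pi_0$-level existence-and-uniqueness statements. A secondary point requiring care is the reduction from general $n$ to $n = 2$: one should either invoke the iterated $2$-Segal fibre-product description of $\Wald_n$ and the pasting law for pullbacks, or simply note that \cite{galvez2018} proves CULF/IKEO can be tested on the generating (co)degeneracy and inner-face maps, so it is enough to test Cartesianness of $F$ against $\partial_1 : \Wald_2 \to \Wald_1$ and $s_0 : \Wald_1 \to \Wald_2$ (for CULF) and against the inert edges (for IKEO).
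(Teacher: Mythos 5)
Your proposal is correct and follows essentially the same route as the paper: reduce to the $n=2$ Cartesian squares, chase them to extract the object-level conditions on conflations, and then invoke Proposition \ref{funprop} together with the linearization of Section \ref{sec:groupoidtolinearalg}. The only differences are cosmetic --- the paper justifies the restriction to $n=2$ by noting that the Hall (co)algebra only involves the $2$-truncation of $\Wald_{\bullet}(-)$ rather than via the $2$-Segal fibre-product decomposition you suggest, and your reading of the CULF square correctly records a uniqueness clause that the paper's informal phrasing of that condition omits.
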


\begin{proof}
We examine the special $n=2$ cases of the CULF and IKEO conditions. This suffices to determine the conditions in our premises, since the data of the Hall (co)algebra involves only the $2$-truncation of $\Wald_{\bullet}$. Conversely, it is easy to check that these conditions suffice to determine maps between Hall (co)algebras after appropriate finiteness conditions are imposed.

When $n=2$, the CULF condition as spelled out in Proposition \ref{culfikeoprop} requires that the diagram
\[
\begin{tikzpicture}[baseline= (a).base]
\node[scale=1.0] (a) at (0,0){
\begin{tikzcd}
\Wald_{1}(\AA) \arrow{d}[left]{F_1} \arrow[leftarrow]{r}[above]{\partial_1} & S_2(\AA) \arrow{d}[right]{F_2}\\
\Wald_{1}(\BB) \arrow[leftarrow]{r}[above]{\partial_1} & \Wald_2(\BB)
\end{tikzcd}
};
\end{tikzpicture}
\]
is Cartesian. So there is an isomorphism
\[
\begin{tikzcd}
\gamma : \Wald_2(\AA) \arrow{r}[above]{\sim} & \Wald_1(\AA)\times_{\Wald_1(\BB)} \Wald_2(\BB).
\end{tikzcd}
\]
This amounts to the requirement that the association
\begin{equation}
\label{eq:FonS2}
\begin{array}{r@{}r@{}l}
    F_2 : \Wald_2(\AA) &{} \rightarrow &{} \Wald_2(\BB)\\
    0 \to X\xrightarrow{f} Y\to Z\to 0 &{} \mapsto &{} 0 \to F(X)\to F(Y)\to F(Z)\to 0,
\end{array}
\end{equation}
where $Z := \coker(f)$, induced by the exact functor $F$ must satisfy the following additional property: If
$$0\to \gamma(X) \to F(Y)\to \gamma(Z)\to 0$$
is a short exact sequence in $\BB$, then there exist $X$ and $Z$ in $S_1(\AA)$ such that $F_1(X) = \gamma(X)$ and $F_1(Z)=\gamma(Z)$. In words, the image of $F$ is closed under subobjects and quotients.

On the other hand, the IKEO condition from Proposition \ref{culfikeoprop} translates to the requirement that the diagram
\[
\begin{tikzpicture}[baseline= (a).base]
\node[scale=1.0] (a) at (0,0){
\begin{tikzcd}
\Wald_{2}(\AA) \arrow{d}[left]{F_2} \arrow{r}[above]{(\partial_2,\partial_0)} & \Wald_1(\AA)\times \Wald_1(\AA) \arrow{d}[right]{F_1\times F_1}\\
\Wald_{2}(\BB) \arrow{r}[above]{(\partial_2,\partial_0)} & \Wald_1(\BB)\times \Wald_1(\BB)
\end{tikzcd}
};
\end{tikzpicture}
\]
is Cartesian. (Note that $\Wald_0(\AA)$ and $\Wald_0(\BB)$ are both the trivial groupoid.) So there is an isomorphism
\[
\begin{tikzcd}
\gamma : \Wald_2(\AA) \arrow{r}[above]{\sim} & \Wald_2(\BB)\times_{\Wald_1(\BB)\times \Wald_1(\BB)} \left( \Wald_1(\AA)\times \Wald_1(\AA)\right)
\end{tikzcd}
\]
which translates to the following additional requirement on the morphism from equation \eqref{eq:FonS2}:
If
$$0\to X\to Y\to Z\to 0$$ 
is a short exact sequence in $\AA$, then there is a unique extension of $F(Z)$ by $F(X)$ in $\BB$ which is $F(Y)$.
\end{proof}

A slightly more involved approach would study the CULF and IKEO conditions 
only after applying homotopy cardinality to all of the diagrams involved. This might lead to slightly different statements; compare to \cite[Proposition 5.3]{cooperSamuelson2022}.

\section{Representations of Hall algebras via relative $2$-Segal spaces}
\label{sec:rel2Seg}

The focus of this section is the theory of relative $2$-Segal spaces, as formulated in \cite{walde2016,mbyoung2018b}. From the perspective of this paper, the main feature of relative $2$-Segal spaces is that they lead naturally to modules over Hall algebras.

\subsection{Relative $2$-Segal spaces}

\begin{definition}
Let $X_{\bullet}$ be a $2$-Segal space. A morphism $F_{\bullet}: Y_{\bullet} \rightarrow X_{\bullet}$ of simplicial spaces is \emph{relative $2$-Segal} if $Y_{\bullet}$ is $1$-Segal and, for every $n \geq 2$ and $0 \leq i < j \leq n$, the outside square of the diagram
\begin{equation}
\label{eq:rel2SegalDiagram}
\begin{tikzpicture}[baseline= (a).base]
\node[scale=1] (a) at (0,0){
\begin{tikzcd}
Y_n \arrow{r} \arrow{d} & Y_{\{0, \dots, i,j, \dots,n\}} \arrow{d}  \\
Y_{\{i, \dots, j\}} \arrow{r} \arrow{d}[left]{F_{\{i, \dots, j\}}} & Y_{\{i,j\}} \arrow{d}[right]{F_{\{i,j\}}} \\
X_{\{i, \dots, j\}} \arrow{r} & X_{\{i,j\}}
\end{tikzcd}
};
\end{tikzpicture}
\end{equation}
is Cartesian.
\end{definition}

As for $2$-Segal spaces, we have formulated the relative $2$-Segal condition in a manner which makes their relevance to modules over Hall algebras most apparent. More intrinsic formulations can be found in \cite{walde2016,mbyoung2018b}.

We give two classes of examples of relative $2$-Segal spaces, both of which are relative variants of the Waldhausen $\Wald_{\bullet}$-construction of Section \ref{sec:waldhausen}.

\begin{example}[{\cite{walde2016}, \cite{mbyoung2018b}}]
\label{ex:framedConstr}
Let $\CC$ be a $k$-linear abelian category with Grothendieck group $K_0(\CC)$. Following Bridgeland, a \emph{stability function} on $\CC$ is a group homomorphism $Z : K_0(\CC) \rightarrow \mathbb{C}$ such that
\[
Z(A) \in \mathbb{H}_+ := \left\{ m e^{\sqrt{-1}\pi  \phi} \mid m \in \R_{>0}, \; \phi \in (0,1] \right\} \subset \mathbb{C}
\]
for all non-zero objects $A \in \CC$ \cite[\S 2]{bridgeland2007}. Write $\phi(A) \in (0,1]$ for the phase of $Z(A)$. A non-zero object $A \in \CC$ is called \emph{$Z$-semistable} if $\phi(A^{\prime}) \leq \phi(A)$ for all non-zero subobjects $A^{\prime} \subset A$. The full subcategory $\CC^{\sst}_{\phi} \subset \CC$ of $Z$-semistable objects of phase $\phi \in (0,1]$ (together with the zero object) is abelian whence defines a $2$-Segal space $\Wald_{\bullet}(\CC^{\sst}_{\phi})$ by Theorem \ref{thm:Wald2Seg}.

Let $\Phi: \CC \rightarrow \vect_k$ be a left exact functor. A \emph{framed object} is then a pair $(M,s)$ consisting of an object $M \in \CC$ and a section $s \in \Phi(M)$ \cite[\S 4]{soibelman2016}. A morphism of framed objects $(M,s) \rightarrow (M^{\prime}, s^{\prime})$ is a pair $(\pi, \lambda) \in \Hom_{\CC}(M, M^{\prime}) \times k$ which satisfies $\Phi(\pi)(s) = \lambda s^{\prime}$. A framed object $(M,s)$ is called \emph{stable framed} if $M$ is $Z$-semistable and $\phi(A) < \phi(M)$ for all non-zero proper subobjects $A \subset M$ for which $s \in \Phi(A) \subset \Phi(M)$.

Let $\Wald^{\st \mhyphen \fr}_{\bullet}(\CC^{\sst}_{\phi})$ be the simplicial groupoid whose $n$-simplices consist of diagrams of the form
\[
\begin{tikzpicture}[baseline= (a).base]
\node[scale=1] (a) at (0,0){
\begin{tikzcd}
0 \arrow[tail]{r} & A_{\{0,1\}} \arrow[two heads]{d} \arrow[tail]{r} & \cdots   \arrow[tail]{r} & A_{\{0,n\}} \arrow[two heads]{d} \arrow[tail]{r} & (M_0, s_0) \arrow[two heads]{d} \\
 & 0 \arrow[tail]{r} & \cdots   \arrow[tail]{r} & A_{\{1,n\}} \arrow[two heads]{d}  \arrow[tail]{r} & (M_1, s_1) \arrow[two heads]{d} \\
 &  & \ddots & \vdots \arrow[two heads]{d} & \vdots \arrow[two heads]{d} \\
  &  & & 0  \arrow[tail]{r} & (M_n, s_n) \arrow[two heads]{d} \\
  &  &  &  & 0
\end{tikzcd}
};
\end{tikzpicture}
\]
where each $(M_i,s_i)$ is a stable framed object and the underlying diagram, obtained by forgetting the framing data $s_0, \dots, s_n$, is an object of $\Wald_{n+1}(\CC^{\sst}_{\phi})$. In the above diagram a morphism $A_{\{i,n\}} \rightarrow (M_i,s_i)$ is simply a morphism $A_{\{i,n\}} \rightarrow M_i$.
The simplicial morphism $F_{\bullet} :\Wald^{\st \mhyphen \fr}_{\bullet}(\CC^{\sst}_{\phi}) \rightarrow \Wald_{\bullet}(\CC^{\sst}_{\phi})$ which forgets the rightmost column is relative $2$-Segal.
\end{example}

\begin{example}[{\cite{mbyoung2018b}}]
\label{ex:RConstr}
Let $\CC$ be a proto-exact category with duality $(P,\Theta)$, that is, a proto-exact duality functor $P : \CC^{\op} \rightarrow \CC$ and a natural double dual isomorphism $\Theta: \id_{\CC} \Rightarrow P \circ P^{\op}$ whose components satisfy $P(\Theta_U) \circ \Theta_{P(U)} = \id_{P(U)}$, $U \in \CC$. A \emph{symmetric form} in $\CC$ is then a pair ($M,\psi_M)$ consisting of an object $M \in \CC$ and an isomorphism $\psi_M: M \xrightarrow[]{\sim} P(M)$ which satisfies $P(\psi_M) \circ \Theta_M = \psi_M$. The \emph{hermitian groupoid} $\CC^h$ is the groupoid of symmetric forms and their isometries.

Let $(N,\psi_N)$ be a symmetric form in $\CC$. The \emph{orthogonal} $U^{\perp}$ of an inflation $i : U \rightarrowtail N$ is defined to be the pullback
\[
\begin{tikzpicture}[baseline= (a).base]
\node[scale=1] (a) at (0,0){
\begin{tikzcd}
U^{\perp} \arrow[dashed,two heads]{d} \arrow[dashed,tail]{r}{k} & N \arrow[two heads]{d}{P(i) \circ \psi_N} \\
0 \arrow[tail]{r} & P(U).
\end{tikzcd}
};
\end{tikzpicture}
\]
The inflation $i$ is called \emph{isotropic} if $P(i) \circ \psi_N \circ i$ is zero and the induced morphism $U \rightarrow U^{\perp}$ is an inflation. In this case, if $\CC$ is exact, then on the pushout
\[
\begin{tikzpicture}[baseline= (a).base]
\node[scale=1] (a) at (0,0){
\begin{tikzcd}
U \arrow[two heads]{d} \arrow[tail]{r} & U^{\perp} \arrow[dashed,two heads]{d}{\pi} \\
0 \arrow[dashed,tail]{r} & M
\end{tikzcd}
};
\end{tikzpicture}
\]
there exists a unique symmetric form $\psi_M$ which satisfies $P(k) \psi_N k = P(\pi) \psi_M \pi$. All known proto-exact categories are known to satisfy this property, which we henceforth assume. We call $(M, \psi_M)$ is the \emph{isotropic reduction of $N$ by $U$} and denote it by $N \git U$. 


%
%

Following Shaprio and Yao \cite{shapiro1996}, let $\RWald_{\bullet}(\CC)$ be the simplicial groupoid whose $n$-simplicies $\RWald_n(\CC)$ consist of diagams
\begin{equation}
\label{eq:RWaldDiag}
\begin{tikzpicture}[baseline= (a).base]
\node[scale=0.75] (a) at (0,0){
\begin{tikzcd}
0 \arrow[tail]{r} & A_{\{0,1\}} \arrow[two heads]{d} \arrow[tail]{r} & \cdots \arrow[tail]{r}& A_{\{0,n\}} \arrow[tail]{r} \arrow[two heads]{d} & A^{\perp}_{\{0,n\}} \arrow[tail]{r} \arrow[two heads]{d} & \cdots \arrow[tail]{r} & A^{\perp}_{\{0,1\}} \arrow[two heads]{d} \arrow[tail]{r} & (M_0, \psi_0) \arrow[two heads]{d} \\
 & 0 \arrow[tail]{r} & \cdots \arrow[tail]{r} & A_{\{1,n\}} \arrow[tail]{r} & A^{\perp}_{\{1,n\}} \arrow[tail]{r} & \cdots \arrow[tail]{r} & (M_1, \psi_1) \arrow[two heads]{d}  \arrow[tail]{r} & P(A^{\perp}_{\{0,1\}}) \arrow[two heads]{d} \\
 &  & & & & \ddots & \vdots \arrow[two heads]{d} & \vdots \arrow[two heads]{d} \\
 &  & & & & & P(A_{\{1,n\}}^{\perp}) \arrow[two heads]{d} \arrow[tail]{r} & P(A_{\{0,n\}}^{\perp}) \arrow[two heads]{d} \\
 &  & & & & & P(A_{\{1,n\}}) \arrow[two heads]{d} \arrow[tail]{r} & P(A_{\{0,n\}}) \arrow[two heads]{d} \\
 &  & & & & & \vdots \arrow[two heads]{d} & \vdots \arrow[two heads]{d} \\
 &  & & & & & 0 \arrow[tail]{r} & P(A_{\{0,1\}}) \arrow[two heads]{d} \\
 &  &  & & & & & 0
\end{tikzcd}
};
\end{tikzpicture}
\end{equation}
such that 
\begin{enumerate}[label=(\roman*)]
\item each object $(M_i, \psi_i)$ is a symmetric form,

\item upon forgetting $\psi_i$, $0 \leq i \leq n$, the diagram is an object of $\Wald_{2n+1}(\CC)$,

\item each inflation $A_{\{ i,j \}} \rightarrowtail (M_i,\psi_i)$ is isotropic with orthogonal $A^{\perp}_{\{ i,j \}}\rightarrowtail (M_i,\psi_i)$,

\item for every $0 \leq i \leq j \leq n$, the symmetric form $(M_j,\psi_j)$ is isometric to the reduction $(M_i, \psi_i) \git A_{\{i,j\}}$,

\item the diagram below the diagonal $x=y$ is the $P$-image of the diagram above the diagonal.
\end{enumerate}
For example, $\RWald_0(\CC) \simeq \CC^h$ while $\RWald_1(\CC)$ consists of diagrams
\begin{equation}
\label{eq:R1Diag}
\begin{tikzpicture}[baseline= (a).base]
\node[scale=1] (a) at (0,0){
\begin{tikzcd}
0 \arrow[tail]{r} & A_{\{0,1\}} \arrow[two heads]{d} \arrow[tail]{r} & A_{\{0,1^{\prime}\}} \arrow[tail]{r} \arrow[two heads]{d} & (M_0,\psi_0) \arrow[two heads]{d} \\
 & 0 \arrow[tail]{r} & (M_1,\psi_1)  \arrow[two heads]{d} \arrow[tail]{r} & A_{\{1,0^{\prime}\}} \arrow[two heads]{d} \\
 & & 0 \arrow[tail]{r} & A_{\{1^{\prime},0^{\prime}\}} \arrow[two heads]{d} \\
 & & & 0
\end{tikzcd}
};
\end{tikzpicture}
\end{equation}
which present $(M_1,\psi_1)$ as the isotropic reduction of $(M_0,\psi_0)$ by $A_{\{0,1\}}$.
%
%
In general, objects of $\RWald_n(\CC)$ are isotropic $n$-flags together with presentations of all subquotients and subreductions.

The forgetful map $F_{\bullet}: \RWald_{\bullet}(\CC) \rightarrow \Wald_{\bullet}(\CC)$ which sends a diagram \eqref{eq:RWaldDiag} to the subdiagram on the objects $\{ A_{\{p,q\}}\}_{0 \leq p \leq q \leq n}$ is relative $2$-Segal. The proof boils down to a symmetric form of the Third Isomorphism Theorem, asserting a canonical isometry
\[
(M,\psi_M) \git U \simeq \left( (M,\psi_M) \git V\right)  \git (U \slash V)
\]
for a chain of isotropic subobjects $V \rightarrowtail U \rightarrow (M,\psi_M)$. In the context of $\infty$-categories, the relative $2$-Segal property of $F_{\bullet}$ was proven recently by G\"{o}dicke \cite{godicke2024}.

The $\RWald_{\bullet}$-construction was originally introduced in the context of Grothendieck--Witt theory. Indeed, for an exact category $\CC$, the Grothendieck--Witt group $GW_i(\CC)$ is the $i$\textsuperscript{th} homotopy group of the homotopy fibre over $0 \in \CC$ of the map $\vert F_{\bullet} \vert: \vert \RWald_{\bullet}(\CC) \vert  \rightarrow \vert \Wald_{\bullet}(\CC) \vert$ \cite{schlichting2010b}. For studies of the (Grothendieck--)Witt theory of proto-exact categories, see \cite{eberhardtLorscheidYoung2022b,eberhardtLorscheidYoung2022}.
\end{example}

\subsection{Hall modules via the $\RWald_{\bullet}$-construction}

Let $F_{\bullet}: Y_{\bullet} \rightarrow X_{\bullet}$ be relative $2$-Segal. Arguing as in Section \ref{sec:algSpan}, the span
\[
\begin{tikzpicture}[baseline= (a).base]
\node[scale=1.0] (a) at (0,0){
\begin{tikzcd}
{} & Y_1 \arrow{dl}[above left]{(F_1,\partial_0)} \arrow{dr}[above right]{\partial_1}& {} \\
X_1 \times Y_0 & {} & Y_0
\end{tikzcd}
};
\end{tikzpicture}
\]
gives $Y_0$ the structure of a left $(X_1,m)$-module object in $\Span(\spaces)$. Specifically, the first non-trivial $1$-Segal condition on $Y_{\bullet}$ and the first non-trivial instance of diagram \eqref{eq:rel2SegalDiagram} imply associativity of the $(X_1,m)$-action.

Consider again the setting of Section \ref{sec:finCat}, so that the proto-exact category $\CC$ is finitary and $\lin = \Fun_c(\pi_0(-),\C)$. We consider the relative $2$-Segal space $\RWald_{\bullet}(\CC) \rightarrow \Wald_{\bullet}(\CC)$ of Example \ref{ex:RConstr}. Then $M(\CC) := M(\RWald_{\bullet}(\CC); \lin)$ is the complex vector space with basis $\{1^P_W\}_{W \in \pi_0 \CC^h}$ labelled by isometry classes of symmetric forms in $\CC$. The analogue of Lemma \ref{lem:finImpliesSmPr} holds, so that the morphisms $(F_1,\partial_0)$ and $\partial_1$ which appear in the span
\[
\begin{tikzpicture}[baseline= (a).base]
\node[scale=0.8] (a) at (0,0){
\begin{tikzcd}[,column sep=0.25in]
{} & \RWald_1(\CC) \arrow{dl}[above left]{(F_1,\partial_0)} \arrow{dr}[above right]{\partial_1}& {} \\
\Wald_1(\CC) \times \RWald_0(\CC) & {} & \RWald_0(\CC)
\end{tikzcd}
};
\end{tikzpicture},
\qquad
\begin{tikzpicture}[baseline= (a).base]
\node[scale=0.8] (a) at (0,0){
\begin{tikzcd}[arrows={|->},column sep=0.2in]
{} & \textnormal{Diag. } \eqref{eq:R1Diag} \arrow{dl} \arrow{dr} & {} \\
(U,(M_1,\psi_1)) & {} & (M_0,\psi_0)
\end{tikzcd}
};
\end{tikzpicture}
\]
are smooth and proper with repsect to $\lin$, respectively. Hence, $M(\CC)$ becomes a left $\Hall(\CC)$-module with action determined by
\[
1_U \star 1^P_M = \sum_{N \in \pi_0 \CC^h} d_{U,M}^N 1^P_N.
\]
The structure constant $d_{U,M}^N$ is equal to the cardinality of the set 
\[
\{U^{\prime} \subset N \mid U^{\prime} \simeq U \mbox{ is isotropic in }V, \; N \git U^{\prime} \simeq M \}.
\]

\begin{example}
\label{ex:pointFqMod}
We consider a symplectic counterpart of Example \ref{ex:pointFq}. Give $\vect_{\F_q}$ the standard $\F_q$-linear duality functor $P = \Hom_{\F_q}(-, \F_q)$ and double dual isomorphism $\Theta_U = - \ev_U$ the negative of the canonical evaluation isomorphism. Symmetric forms are then symplectic vector spaces. The resulting Hall module is
\[
\HallM(\vect_{\F_q}) = \bigoplus_{m = 0}^{\infty} \C \cdot 1_{2m}^P,
\]
where $1_{2m}^P$ denotes the characteristic function of the symplectic vector space of dimension $2m$. The module structure is
\[
1_n \star 1_{2m}^P = q^{\frac{n(n+1)}{2}}\left[ \begin{matrix} n+m \\ n \end{matrix} \right]_q 1^P_{n+m},
\]
where now the structure constant counts the number of $\F_q$-rational points of the isotropic Grassmannian $\textnormal{IGr}(n,2(m+n))$.

Taking instead $\Theta_U = \ev_U$ results in symmetric forms which are orthogonal vector spaces. The resulting Hall module $\HallM(\vect_{\F_q})$ is a direct sum of $\Hall(\vect_{\F_q})$-modules labelled by the Witt group, that is, the parity of the dimension of orthogonal vector spaces, and has structure constants which count $\F_q$-rational points of orthogonal Grassmannians.
\end{example}

\begin{example}[\cite{mbyoung2016}]
\label{ex:quiverFqMod}
We consider a relative counterpart of Example \ref{ex:quiverFq}. Let $\sigma$ be an anti-involution of a quiver $Q$, that is, $\sigma$ is self-bijections of the sets of vertices and arrows of $Q$ such that if $\alpha: i \rightarrow j$ is an arrow in $Q$, then so too is $\sigma(\alpha): \sigma(j) \rightarrow \sigma(i)$. Define a duality structure on $\rep_{\F_q}(Q)$ by $P = (-)^{\vee} \circ \sigma^*$, where $(-)^{\vee}$ denotes vertex-wise $\F_q$-linear duality and $\Theta$ is the vertex-wise evaluation isomorphism. Symmetric forms in $\rep_{\F_q}(Q)$ are orthogonal counterparts of quiver representations. See \cite{derksen2002}. More general duality structures exist which incorporate various signs associated to vertices and arrows and give symmetric forms which are mixed orthogonal/symplectic representations of $Q$. By incorporating the coalgebra and comodule structures of $H(Q)$ and $M(Q)$ (the latter has been neglected in this article), one checks that the $H(Q)$-module $M(Q)$ lifts to a module over Kashiwara--Enomoto's \emph{reduced $\sigma$-analogue} of the quantum group $U_{\sqrt{q}}(\mathfrak{g}_Q)$ \cite{enomoto2008}. With this interpretation, $M(Q)$ decomposes into highest weight modules labelled by the cuspidal elements of $M(Q)$.
\end{example}

The $\F_1$-analogue of Example \ref{ex:quiverFqMod} is studied in \cite{mbyoung2021}.

\begin{example}[\cite{mbyoung2020b}]
\label{ex:CoHM}
We consider a relative counterpart of Section \ref{sec:CoHA}. Give $\rep_{\C}(Q)$ a duality structure as in Example \ref{ex:quiverFqMod}. The resulting \emph{cohomological Hall module} $M(Q) = M(\RWald_{\bullet}(\rep_{\C}(Q)); H^{\bullet}(-;\Q))$ plays a central role in the orientifold Donaldson--Thomas theory of $Q$, that is, the mixed orthogonal/symplectic generalization of Donaldson--Thomas theory \cite{mbyoung2015,mbyoung2020b}. Again, we restrict attention to the $m$-loop quiver $Q_m$. Fix the duality $P= \Hom_{\C}(-,\C)$ and double dual isomorphism $\Theta = - \ev$. There is an isomorphism of stacks
\[
\RWald_0(\rep_{\C}(Q_m))
\simeq
\bigsqcup_{e = 0}^{\infty}
[\mathfrak{sp}_{2e}(\C)^{\oplus m} \slash \textnormal{Sp}_{2e}(\C)]
\]
so that the underlying $\Z \times \Z$-graded vector space of $M(Q_m)$ is
\[
M(Q_m)
=
\bigoplus_{e=0}^{\infty} M_{2e}(Q_m),
\]
where
\[
M_{2e}(Q_m) \simeq H^{\bullet}( B\textnormal{Sp}_{2e}(\C))
\simeq 
\Q[z_1^2,\dots,z_e^2]^{\mathfrak{S}_e},
\qquad
\vert z_i \vert =2.
\]
Keeping the notation of Section \ref{sec:CoHA}, the action of $f \in H_{d}(Q_m)$ on $g \in M_{2e}(Q_m)$ is
\begin{align*}
f \star g
=
2^{(m-1)d} \sum_{ \pi \in \mathfrak{sh}_{d,e}^{\sigma}}  \pi  \Big[ & f   (x_1, \dots, x_d) g(z_1, \dots, z_e) \times \\ & \Big(\prod_{i=1}^d (-x_i)   \prod_{1 \leq i < j \leq d} (- x_i - x_j)  \prod_{i=1}^d \prod_{j=1}^{e} (x_i^2 - z_j^2) \Big)^{m-1} \Big],
\end{align*}
where $\mathfrak{sh}^{\sigma}_{d,e} = \mathbb{Z}_2^d \times \mathfrak{sh}_{d, e}$ is the set of signed-shuffles of type $(d,e)$. There is an analogue of Efimov's theorem in this relative setting. Namely, define a $2 \Z_{\geq 0} \times \Z$-graded vector space by
\[
W_{Q_m}^{\prim} = M(Q_M) \slash (H(Q_m)_+ \star M(Q_m)),
\]
where $H(Q_m)_+$ is the augmentation ideal of $H(Q_m)$, and pick a graded splitting so as to view $W^{\prim}_{Q_m}$ as a subspace of $M(Q_m)$. The vector space $W_{Q_m}^{\prim}$ is the \emph{cohomological orientifold Donaldson--Thomas invariant of $Q$}. Then, for each $e \in 2 \Z_{\geq 0}$, there is a $\Z_{\geq 0} \times \mathbb{Z}$-graded subalgebra $H(Q_m)(e) \subset H(Q_m)$ such that the action map
\begin{equation}
\label{eq:oriDTConj}
\star: \bigoplus_{e \in 2 \Z_{\geq 0}} H(Q_m)(e) \otimes_{\Q} W^{\prim}_{Q_m,e} \rightarrow M(Q_m)
\end{equation}
is an isomorphism of graded vector spaces. Moreover, $H(Q_m)(e)$ can be described with sufficient understanding of $V_{Q_m}^{\prim}$. This isomorphism can be used to inductively determine $W^{\prim}_{Q_m,e}$. While the isomorphism  \eqref{eq:oriDTConj} remains conjectural for general symmetric quivers, it is known that $W_{Q}^{\prim}$ is isomorphic to the Chow group of the moduli space of stable symmetric representations of $Q$ \cite{franzenYoung2018}.
\end{example}

There is also a motivic version of Example \ref{ex:CoHM}; see \cite{mbyoung2015,mbyoung2021,bu2023}.

\begin{example}
Working in the context of Example \ref{ex:greenZele}, consider $\vect_{\F_q}$ with the standard orthogonal or symplectic duality structure, as described in Example \ref{ex:pointFqMod}. The resulting module $M(\RWald_{\bullet}(\vect_{\F_q});\lin)$ is isomorphic to
\[
\bigoplus_{m=0}^{\infty} K_0(\rep_{\C}(G_m(\F_q))),
\]
where $G_m = \textnormal{O}_m$ (resp. $G_m = \textnormal{Sp}_{2m}$) if $\Theta = \ev$ (resp. $\Theta = -\ev$) with $H(\Wald_{\bullet}(\vect_{\F_q});\lin)$ acting by parabolic induction. van Leeuwen proved that $M(\RWald_{\bullet}(\vect_{\F_q});\lin)$ is minimally generated over $H(\Wald_{\bullet}(\vect_{\F_q});\lin)$ by the set of cuspidal representations of the tower of groups $G_m(\F_q)$, $m \geq 0$ \cite{leeuwen1991}.
\end{example}

\subsection{Hall modules via framing}

\begin{example}
We work with the stable framed $\Wald_{\bullet}$-construction of Example \ref{ex:framedConstr}. Subject to smoothness and properness conditions, for each phase $\phi \in [0,1)$ there results a module $M^{\st \mhyphen \fr}_{\phi}(\CC) := M(\Wald_{\bullet}^{\st \mhyphen \fr}(\CC^{\sst}_{\phi});\lin)$ over the Hall algebra $H^{\sst}_{\phi}(\CC) := H(\Wald_{\bullet}(\CC^{\sst}_{\phi});\lin)$.

For example, let $\CC=\rep_{\C}(Q)$. Sending a representation to its dimension vector defines a surjective group homomorphism $K_0(\rep_{\C}(Q)) \rightarrow \mathbb{Z}^{Q_0}$. A tuple $\zeta =(\zeta_i)_{i \in Q_0} \in \mathbb{H}_+^{Q_0}$ defines a function $\mathbb{Z}^{Q_0} \rightarrow \mathbb{C}, d \mapsto \sum_{i \in Q_0} \zeta_i d_i$, which induces a stability function $Z$ on $\rep_{\C}(Q)$. For any $f \in \mathbb{Z}_{\geq 0}^{Q_0}$, the functor $\Phi: U \mapsto \bigoplus_{i \in Q_0} \Hom_{\C}(\C^{f_i}, U_i)$ is a framing. Working in the context of Section \ref{sec:CoHA} and Example \ref{ex:CoHM}, the resulting cohomological $H^{\sst}_{\phi}(Q)$-modules $M^{\st \mhyphen \fr}_{\phi}(Q)$ play an important role in Donaldson--Thomas theory \cite{soibelman2016,franzen2016}. With knowledge of the structure of the algebra $H^{\sst}_{\phi}(Q)$, for example as described in Section \ref{sec:CoHA}, the module $M^{\st \mhyphen \fr}_{\phi}(Q)$ can be used to inductively determine topological invariants of the smooth moduli space of stable framed representations of $Q$, also known as non-commutative Hilbert schemes. From a different perspective, $M^{\st \mhyphen \fr}_{\phi}(Q)$ can be seen to approximate $H^{\sst}_{\phi}(Q)$ in the limit $f \rightarrow \infty$. This has the technical benefit that, for sufficiently large $f$, the underlying vector space of $M^{\st \mhyphen \fr}_{\phi}(Q)$ is the cohomology of a smooth variety, as opposed to a stack. This idea is used to great effect to study $H^{\sst}_{\phi}(Q)$ in \cite{davison2020}. Similar ideas can be applied in the context of moduli spaces of sheaves over a smooth projective curve \cite{mozgovoy2015} or surface \cite{diaconescu2022}.
\end{example}


\newcommand{\etalchar}[1]{$^{#1}$}
\providecommand{\bysame}{\leavevmode\hbox to3em{\hrulefill}\thinspace}
\providecommand{\MR}{\relax\ifhmode\unskip\space\fi MR }
\providecommand{\MRhref}[2]{%
  \href{http://www.ams.org/mathscinet-getitem?mr=#1}{#2}
}
\providecommand{\href}[2]{#2}

\end{document}